\title[Infima of length functions and dual cube complexes]{Infima of length functions and\\ dual cube complexes}
\author{Jonah Gaster}
\email{gaster@bc.edu}
\address{
Department of Mathematics\\
Boston College\\ \newline
Chestnut Hill, MA 02467 \\
United States
}
\urladdr{http://www2.bc.edu/jonah-gaster}
\newcommand{\C}{\mathbb{C}}
\renewcommand{\H}{\mathbb{H}}
\newcommand{\T}{\mathcal{T}}
\newcommand{\sep}{\bold{sep}}
\newcommand{\into}{\hookrightarrow}
\renewcommand{\H}{\mathbb{H}}
\numberwithin{equation}{section}
\theoremstyle{plain}
\newtheorem{thm}{Theorem}[section]
\newtheorem*{thm*}{Theorem}
\newtheorem{lemma}[thm]{Lemma}
\newtheorem{prop}[thm]{Proposition}
\newtheorem{bigthm}{Theorem}
\theoremstyle{definition}
\newtheorem{question}{Question}
\theoremstyle{definition}
\newtheorem{definition}{Definition}
\theoremstyle{definition}
\newtheorem{example}{Example}
\theoremstyle{definition}
\newtheorem{remark}{Remark}
\begin{document}

\begin{abstract} 
In the presence of certain topological conditions, we provide lower bounds for the infimum of the length function associated to a collection of curves on Teichm\"{u}ller space that depend on the dual cube complex associated to the collection, a concept due to Sageev. As an application of our bounds, we obtain estimates for the `longest' curve with $k$ self-intersections, complementing work of Basmajian \cite{basmajian}.
\end{abstract}
\maketitle

%%%%%%% Start of main body of the article %%%%%%%%%

Let $\Sigma$ be an oriented topological surface of finite type. We denote the Teichm\"{u}ller space of $\Sigma$ by $\T(\Sigma)$, which we interpret as the deformation space of marked hyperbolic structures on $\Sigma$. Given $X\in\T(\Sigma)$ and a free homotopy class (or \emph{closed curve}) $\gamma$ on $\Sigma$, we denote by $\ell(\gamma,X)$ the length of the geodesic representative of $\gamma$ in the hyperbolic structure determined by $X$. If $\Gamma= \{\gamma_i\}$ is a collection of closed curves, then we define $\ell(\Gamma,X)=\sum \ell(\gamma_i,X).$

In this note, we are concerned with translating topological information of $\Gamma$ into quantitative information about the length function $\ell(\Gamma,\cdot) : \T(\Sigma)\to\mathbb{R}$. In particular, we develop tools to estimate the infimum of $\ell(\Gamma,\cdot)$ over $\T(\Sigma)$. This work naturally complements \cite{basmajian}, where such estimates are obtained that depend on the number of self-intersections of $\Gamma$. Here we consider a finer topological invariant than the self-intersection number. 

A construction of Sageev \cite{sageev} associates to a curve system $\Gamma$ an isometric action of $\pi_1\Sigma$ on a finite-dimensional cube complex, or the \emph{dual cube complex} $\mathcal{C}(\Gamma)$ of $\Gamma$. In what follows we connect geometric properties of the dual cube complex $\mathcal{C}(\Gamma)$ to the length of the collection of curves $\Gamma$ on any hyperbolic surface. Indeed, \cite[Thm.~3]{aougab-gaster} suggests that any such information is implicitly contained in the combinatorics of $\mathcal{C}(\Gamma)$. We have:

\begin{bigthm}
\label{main estimate}
Suppose that the action of $\pi_1\Sigma$ on $\mathcal{C}(\Gamma)$ has a set of cubes $C_1,\ldots,C_m$, of dimensions $n_1, \ldots, n_m$, respectively, in distinct $\pi_1\Sigma$-orbits, and so that the union of orbits $\displaystyle \bigcup_i \pi_1\Sigma\cdot C_i$ is hyperplane separated. Then 
\[
\inf_{X\in\T(\Sigma)} \ell(\Gamma,X) \ \ge \sum_{i=1}^{m} n_i \log \left( \frac{1+\cos\frac{\pi}{n_i}}{1-\cos \frac{\pi}{n_i}}\right)~.
\]
\end{bigthm}

The definition of \emph{hyperplane separated} can be found in \S\ref{cube section}. The main use of this idea is that it allows one to conclude that `large chunks' of the preimage of the curves $\Gamma$ in the universal cover embed on the surface under the covering map. The proof of Theorem \ref{main estimate} proceeds by minimizing the length of these large chunks. %A couple of remarks are in order:

\begin{remark}
The %bound above is useless when $C_i$ is a 2-cube
contribution to the bound above from a cube $C_i$ is useless when $C_i$ is a $2$-cube. On the other hand, it still seems reasonable to expect a lower bound on the length function in the presence of many maximal 2-cubes. Note that the presence of $m$ $2$-cubes contributes $m$ to the self-intersection number, so that Basmajian's bounds immediately imply a lower bound for the length function that is logarithmic in $m$. While Basmajian's lower bounds are sharp, the examples that demonstrate sharpness have high-dimensional dual complexes. We expect a positive answer to the following:
\end{remark}

\begin{question}
If $\mathcal{C}(\Gamma)$ contains $m$ maximal $2$-cubes, is there a lower bound for the length function $\ell(\Gamma,X)$ that is linear in $m$?
\end{question}

\begin{remark}
The bounds in Theorem \ref{main estimate} are sharp in the following sense: For each $n\in \mathbb{N}$, there exists a set of curves $\Gamma_n$ on the $(n+1)$-holed sphere $\Sigma_{0,n+1}$, and hyperbolic structures $X_n \in \T(\Sigma_{0,n+1})$ so that: 
\begin{enumerate}
\item The dual cube complex $\mathcal{C}(\Gamma_n)$ has a hyperplane separated $n$-cube. 
\item The hyperbolic length $\ell(\Gamma_n,X_n)$ is asymptotic to $n\log n$.
\end{enumerate}
\end{remark}

The problem remains of determining when a collection of curves gives rise to hyperplane separated orbits of cubes in the action of $\pi_1\Sigma$ on the dual cube complex. We offer a sufficient condition below which applies in many cases, toward which we fix some terminology.  Recall that a \emph{ribbon graph} is a graph with a cyclic order given to the oriented edges incident to each vertex. A ribbon graph $G$ is \emph{even} if the valence of each vertex is even. When an even ribbon graph $G$ is embedded on a surface $\Sigma$, a collection of homotopy classes of curves is determined by $G$ by `going straight' at each vertex. See \S\ref{ribbon section} for a more precise description.

\begin{bigthm}
\label{ribbon graph estimate}
Suppose that $G \into \Sigma$ is an embedding of an even ribbon graph $G$ into $\Sigma$ with vertices of valence $n_1, \ldots, n_m$, such that the complement $\Sigma \setminus G$ contains no monogons, bigons, or triangles. Let $\Gamma$ indicate the union of the closed curves determined by $G$. Then $G$ is a minimal position realization of $\Gamma$, the self-intersection of $\Gamma$ is given by $\binom{n_1}{2} + \ldots + \binom{n_m}{2}$, and $\mathcal{C}(\Gamma)$ contains cubes $C_1,\ldots,C_m$ of dimensions $n_1,\ldots,n_m$, respectively, in distinct $\pi_1\Sigma$-orbits, whose union is hyperplane separated.
\end{bigthm}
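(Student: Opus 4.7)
The plan is to address the four conclusions of Theorem \ref{ribbon graph estimate} in sequence, with the main burden falling on the hyperplane-separation statement.

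First, for the minimal-position claim, I would invoke the bigon criterion of Hass--Scott: a generically immersed system of closed curves on $\Sigma$ is in minimal position if and only if its complement contains no bigons (and no nullhomotopic loop components). The no-bigon hypothesis supplies this directly, so $G$ realizes $\Gamma$ in minimal position. Once minimality is in hand, the self-intersection number of $\Gamma$ equals the number of transverse double points of $G$: at each vertex $v_i$, the $n_i$ local strands obtained by going straight pairwise cross at $v_i$, contributing $\binom{n_i}{2}$ double points, and summing over $i$ gives the stated formula.

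Next, to locate the cubes $C_i$, I would appeal to Sageev's construction. Each connected lift of a curve in $\Gamma$ to the universal cover $\tilde{\Sigma}$ determines a hyperplane in $\mathcal{C}(\Gamma)$, and two hyperplanes cross if and only if the associated lifts meet transversely. Fixing a lift $\tilde{v}_i$ of $v_i$, the $n_i$ strands at $v_i$ lift to $n_i$ arcs through $\tilde{v}_i$ that pairwise meet, so the associated $n_i$ hyperplanes pairwise cross and span a cube $C_i$ of dimension $n_i$. A short argument using injectivity of the embedding of $G$ and the fact that distinct vertices of $G$ lie in distinct orbits of $\pi_1\Sigma$ acting on $\tilde{\Sigma}$ shows that the cubes associated to distinct $v_i$ lie in distinct $\pi_1\Sigma$-orbits.

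The bulk of the work is the hyperplane-separation claim for $\bigcup_i \pi_1\Sigma\cdot C_i$. Unpacking the definition from \S\ref{cube section}, a violation of separation between two hyperplanes ought to translate into a specific combinatorial configuration in $\tilde{\Sigma}$: concretely, a small embedded polygonal region of $\tilde{\Sigma}$ whose sides are subarcs of lifts of curves in $\Gamma$, with few enough sides to prevent separation. I would argue that such a region must project to a bigon or triangle in $\Sigma\setminus G$; the bigon case is excluded by the minimal-position property established above, and the triangle case is excluded by hypothesis. The main obstacle is executing this translation carefully, particularly for ``cross-vertex'' configurations in which the two offending hyperplanes arise from lifts associated to distinct vertices $v_i\ne v_j$, and confirming that the separation criterion is fully captured by absence of bigons and triangles — equivalently, that one never needs to forbid complementary quadrilaterals or larger regions in order to ensure separation.
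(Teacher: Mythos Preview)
Your overall strategy matches the paper's: establish minimal position, count crossings vertex by vertex, lift to obtain the cubes, then argue that any failure of hyperplane separation forces a forbidden complementary region. The gap is in your first step. For a \emph{non-simple} curve system the Hass--Scott criterion requires the absence of \emph{immersed} (singular) monogons and bigons; merely having no embedded bigon among the complementary regions of $\Sigma\setminus G$ is not sufficient. For instance, if two strands bound a bigon and a third strand passes transversely through it, the complement of the resulting picture contains two triangles and no bigon, yet the configuration is not minimal. The paper closes exactly this gap in Lemma~\ref{ribbon graphs minimal position}: pulling $G$ back along an immersed bigon $B\to\Sigma$ cuts $B$ into pieces, and an innermost piece must be an embedded bigon or triangle. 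Thus \emph{both} the no-bigon and the no-triangle hypotheses are already needed to certify minimal position, not only later for separation.

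Your plan for hyperplane separation is on target and coincides with the paper's Lemma~\ref{suff condition for separated}: a failure of separation produces three pairwise-intersecting lifts in $\widetilde{\Sigma}$ bounding a triangle, and an innermost complementary region inside that triangle projects to a triangle in $\Sigma\setminus G$. So quadrilaterals and larger regions never enter; forbidding bigons and triangles is precisely what is required.
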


This provides a general method to construct curves with definite self-intersection number and definite hyperplane separated cubes in their dual cube complexes. For example:

\begin{example}
Consider the curve in Figure \ref{Gamma_tau_6_gluing}. Theorem \ref{ribbon graph estimate} implies that the curve has six hyperplane separated 3-cubes. The estimate in Theorem \ref{main estimate} now applies, so that the length of the pictured curve is at least $6\log 3$ in any hyperbolic metric on $\Sigma_6$.
\end{example}\vspace{.2cm}

Let $\mathfrak{C}_{k}(\Sigma)$ indicate curves on $\Sigma$ with self-intersection number $k$. Basmajian examined the following quantities, showing that they both are asymptotic to $\log k$:
\begin{align*}
m_{k}(\Sigma)&:=\min_{\gamma\in\mathfrak{C}_k(\Sigma)} \inf\{\ell(\gamma,X): X\in\T(\Sigma)\} \\
M_{k}&:=\;\inf \;\{m_{k}(\Sigma):\Sigma \text{ is a finite-type surface with }\chi(\Sigma)<0\}. 
\end{align*}
Note that, for each $k$ and $\Sigma$, there are finitely many mapping class group orbits among $\mathfrak{C}_k(\Sigma)$. This justifies the use of minimum in the definition of $m_{k}(\Sigma)$ above. One may define analogously:
\begin{align*}
\overline{m}_{k}(\Sigma)&:=\max_{\gamma\in\mathfrak{C}_k(\Sigma)} \inf\{\ell(\gamma,X): X\in\T(\Sigma)\} \\
\overline{M}_{k}&:=\;\sup \;\{\overline{m}_{k}(\Sigma):\Sigma \text{ is a finite-type surface with }\chi(\Sigma)<0\}. 
\end{align*}

The curves that realize the minima $m_{k}(\Sigma)$ and $M_{k}$ manage to gain a lot of self-intersection while remaining quite short, which they achieve by winding many times around a very short curve. By constructing explicit families of curves that behave quite differently---namely, they return many times to a fixed `small' compact set on the surface---we provide a lower bound for $\overline{M}_{k}$ that grows faster than Basmajian's bounds for the `shortest' curves with $k$ self-intersections.

\begin{bigthm}
\label{corollary of main thm}
We have the estimate 
\[
\limsup_{k\to\infty} \; \frac{\overline{M}_{k}}{k} \ge \frac{\log 3}{3}~.
\]
\end{bigthm}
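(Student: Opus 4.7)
The plan is to exhibit, for each positive integer $m$, a single closed curve $\gamma_m$ on some surface $\Sigma_m$ with $\gamma_m \in \mathcal{C}_{3m}(\Sigma_m)$ whose dual cube complex contains $m$ hyperplane-separated $3$-cubes. Given such a family, Theorem \ref{main estimate} applied with $n_1 = \cdots = n_m = 3$ (for which $(1+\cos(\pi/3))/(1-\cos(\pi/3)) = 3$) yields
$$\inf_{X \in \T(\Sigma_m)} \ell(\gamma_m, X) \,\ge\, m \log 3,$$
exactly as illustrated in the example of Figure \ref{Gamma(tau_6)gluing} for $m = 6$. Consequently $\overline{M}_{3m} \ge m \log 3$, and restricting to the subsequence $k = 3m$ yields
$$\limsup_{k \to \infty} \frac{\overline{M}_k}{k} \,\ge\, \lim_{m \to \infty} \frac{m \log 3}{3m} \,=\, \frac{\log 3}{3}.$$

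The curves $\gamma_m$ are produced via Theorem \ref{ribbon graph estimate}: I construct a ribbon graph $G_m \into \Sigma_m$ of the type required by that theorem, with $m$ vertices of valence $3$, whose complement in $\Sigma_m$ has no bigons or triangles, and whose associated ``go-straight'' curve system is a single closed curve. Figure \ref{Gamma(tau_6)gluing} realizes this for $m = 6$. For general $m$, one natural family is obtained by embedding a trivalent ribbon graph with only hexagonal complementary faces on a closed surface of appropriate genus (as dictated by Euler's formula), and then adjusting the cyclic orderings at each vertex so that the induced permutation on strands has a single orbit.

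The main obstacle is the simultaneous fulfillment of these three conditions on $G_m$: the valence condition (local), absence of short complementary faces (global), and the single-curve output (combinatorial). The first two are arranged by a direct choice of embedding together with an Euler characteristic computation. The third is the combinatorial heart of the argument: the cyclic data at the vertices must be chosen so that all strands merge into one closed curve, rather than splitting into a multicurve. This is handled by an inductive construction extending the $m = 6$ example of Figure \ref{Gamma(tau_6)gluing}; in case the inductive step only produces a multicurve with a bounded number of components, an averaging argument applied to the sum bounds $\sum L_j \ge m \log 3$ and $\sum K_j \le 3m$ over components still yields a single component with length-to-self-intersection ratio at least $\log 3 / 3$, which suffices for the desired limsup bound.
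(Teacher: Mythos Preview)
Your overall strategy matches the paper's exactly: construct, for each $k$, a curve with self-intersection $3k$ whose dual cube complex has $k$ hyperplane-separated $3$-cubes, apply Theorem~\ref{main estimate}, and pass to the subsequence $3k$. The paper carries this out more directly than you propose, however. Rather than embedding into a closed surface with hexagonal complementary faces (which imposes Euler-characteristic constraints on $m$ and leaves the single-curve condition to be verified), the paper simply works on the ribbon surface $\Sigma(\underline{\tau}_k,\sigma)$ itself, with boundary. There the complement $\Sigma(\underline{\tau}_k,\sigma)\setminus\Gamma(\underline{\tau}_k,\sigma)$ contains no disks at all, so the absence of bigons and triangles is automatic and Lemma~\ref{suff condition for separated} applies with no further checking. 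The explicit pairing $\sigma$ written down in \S\ref{examples section} is chosen so that $\Gamma(\underline{\tau}_k,\sigma)$ is a single closed curve for every $k$, which removes the need for any fallback.

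About that fallback: your averaging argument has a genuine gap. Theorem~\ref{main estimate} yields $\inf_X \sum_j \ell(\gamma_j,X)\ge m\log 3$, but your averaging requires $\sum_j L_j = \sum_j \inf_X \ell(\gamma_j,X)\ge m\log 3$, and the inequality between these two quantities goes the wrong way (the infimum of a sum dominates the sum of the infima, not conversely). Even if you tried to repair this by applying Theorem~\ref{main estimate} component-by-component, the $3$-cubes at vertices where distinct components cross are not cubes of any single component's dual complex, so you would lose them; and in any case you would still need the self-intersection numbers $K_j$ of the selected components to tend to infinity to say anything about $\limsup_k \overline{M}_k/k$. The clean route is the paper's: write down an explicit $\sigma$ that forces a single curve and work on the ribbon surface.
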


\begin{remark}
It is not hard to observe that 
\[
\limsup_{k\to\infty} \; \frac{\overline{m}_{k}(\Sigma)}{\sqrt{k}} >0~.
\]
Indeed, given any $k$-curve $\gamma\in \mathfrak{C}_k(\Sigma)$, %let $\ell$ denote the infimum of the length function $\ell(\gamma,\cdot)$ on $\T(\Sigma)$. C
consider the closed curve $\gamma^n$ given by wrapping $n$ times around $\gamma$. The infimum of the length function of $\gamma^n$ will grow linearly in $n$, while the self-intersection number will grow quadratically in $n$. Performing this calculation with a curve with one self-intersection, one finds that 
\[
\limsup_{k\to\infty} \; \frac{\overline{m}_{k}(\Sigma)}{\sqrt{k}} \ge 4\log(1+\sqrt{2})~.
\]
\end{remark}\vspace{.2cm}

The problem of sharpness for our examples, namely good upper bounds for $\overline{m}_{k}(\Sigma)$ and $\overline{M}_{k}$, seems subtle. In particular, such upper bounds would imply an asymptotically good answer to the following question:

\begin{question}
{Given a curve $\gamma\in\mathfrak{C}_k(\Sigma)$, what is an explicit function $C(k,\Sigma)$ so that there is a point $X\in\T(\Sigma)$ with $\ell(\gamma,X)\le C(k,\Sigma)$?} 
\footnote{While this paper was under review, this question has been given an answer in \cite[Theorem 1.4]{agps}.}
\end{question} 

Note that one could also ask for an upper bound that is independent of $\Sigma$, towards which the lower bound in Theorem \ref{corollary of main thm} is more relevant.

\textbf{Organization} \ In \S\ref{cube section} we briefly recall Sageev's construction, and define hyperplane separation. In \S\ref{proof of main estimate} we lay out the necessary tools for the proof of Theorem \ref{main estimate}, and in \S\ref{proof of prop portions inject} and \S\ref{Lemma optimization} we prove these tools. \S\ref{immersed bigon section} describes a straightforward method of detecting self-intersection and hyperplane separation, and \S\ref{ribbon section} introduces even ribbon graphs and the proof of Theorem \ref{ribbon graph estimate}. Finally, \S\ref{examples section} describes a family of examples to which these tools apply, and contains a proof of Theorem \ref{corollary of main thm}.

\textbf{Acknowledgements} \ The author thanks Ara Basmajian, Martin Bridgeman, Spencer Dowdall, and David Dumas for helpful conversations, 
%%%%%%%%%%%%%%%
%ADDED IN CORRECTION
%%%%%%%%%%%%%%%
and Ian Biringer for a correction and reference in regard to Lemma \ref{min pos immersed bigons}.

\section{Dual cube complexes and hyperplane separation}
\label{cube section}
We recall Sageev's construction. 
A collection $\Gamma$ of homotopy classes of curves on $\Sigma$ gives rise to an isometric action of $\pi_{1}\Sigma$ on a CAT(0)-cube complex $\mathcal{C}(\Gamma)$. 
This action is obtained roughly as follows: 
Choose a minimal position realization $\lambda$ of the curves in $\Gamma$, and %lift them to
consider the preimage $\widetilde{\lambda}$ of $\lambda$ in the universal cover $\widetilde{\Sigma}$. %Each lift of a curve in 
In the language of \cite{Wise}, the set $\widetilde{\lambda}$ decomposes into a union\footnote{An illustrative example is provided by the case that $\lambda$ is given by the geodesic representatives of $\Gamma$ relative to a chosen hyperbolic structure, in which case $\widetilde{\lambda}\subset \mathbb{H}^2$ is evidently a union of complete geodesics.} of \emph{elevations}.
Each elevation splits $\widetilde{\Sigma}$ into two connected components. 
A \emph{labeling} of $\tilde{\lambda}$ is a choice of half-space in the complement of each of the elevations. 
The one skeleton of the cube complex $\mathcal{C}(\Gamma)$ is built from \emph{admissible} labelings of $\widetilde{\lambda}$, i.e.~choices of half-spaces in the complement of the elevations so that any pair intersect. 

Two such admissible labelings are connected by an edge when they differ on precisely one elevation of $\widetilde{\lambda}$ (in analogy with the dual graph to the set $\widetilde{\lambda}$).
Finally, $\mathcal{C}(\Gamma)$ is given by the unique  non-positively curved cube complex with the prescribed 1-skeleton.
The action of $\pi_1\Sigma$ on the elevations comprising $\widetilde{\lambda}$ naturally induces a permutation of the labelings, which induces an isometry of $\mathcal{C}(\Gamma)$.
See \cite{sageev}, \cite{sageev2}, \cite{chatterji-niblo} for details.

We collect this information conveniently:

\begin{thm*}[Sageev]
\label{concise sageev}
The action of $\pi_1\Sigma$ on the $\mathrm{CAT}(0)$ cube complex $\mathcal{C}(\Gamma)$ is independent of realization. There is a $\pi_{1}\Sigma$-equivariant incidence-preserving correspondence of the hyperplanes of $\mathcal{C}(\Gamma)$ with the elevations in $\widetilde{\lambda}$, so that maximal $n$-cubes are in correspondence with maximal collections of $n$ pairwise intersecting elevations of curves in $\Gamma$.
\end{thm*}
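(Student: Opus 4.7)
The plan is to follow Sageev's original construction and verify its three assertions in turn: independence of realization, the hyperplane-to-lift bijection, and the $n$-cube-to-pairwise-intersecting-lift correspondence.

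For independence of realization, I would invoke the classical fact that any two minimal-position realizations of $\Gamma$ on $\Sigma$ are related by an ambient isotopy. Lifting such an isotopy to $\widetilde{\Sigma}$ yields a $\pi_1\Sigma$-equivariant bijection between the two sets of lifts that preserves the pattern of pairwise intersections. Since the admissible labelings of $\widetilde{\lambda}$ that define the vertices of $\mathcal{C}(\Gamma)$, together with their edge and higher-cube structure, depend only on the combinatorial pattern of half-space intersections among lifts, the isotopy induces a $\pi_1\Sigma$-equivariant isomorphism between the complexes built from the two realizations.

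For the hyperplane correspondence, I would assign to each edge $e$ of $\mathcal{C}(\Gamma)$ the unique lift $\tilde\gamma(e) \in \widetilde{\lambda}$ on which its two endpoint labelings disagree. In any square of $\mathcal{C}(\Gamma)$, opposite edges arise from flipping the same lift, so this labeling descends to the equivalence relation generated by ``opposite sides of a square'', whose classes are by definition the hyperplanes. The induced map from hyperplanes to lifts is $\pi_1\Sigma$-equivariant by construction, and one verifies it is a bijection---surjective because every lift appears as the label of the edge flipping that lift from any admissible vertex, and injective because distinct lifts never label the same edge. Incidence preservation then reduces to the observation that two hyperplanes cross in $\mathcal{C}(\Gamma)$ iff a square with these two hyperplanes as midplanes exists iff all four sign-choices on the two corresponding lifts yield admissible labelings, and the last condition is equivalent to the two lifts crossing in $\widetilde{\Sigma}$ (for non-crossing lifts, opposite half-spaces are disjoint and violate admissibility).

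Finally, for the $n$-cube correspondence, an $n$-cube in $\mathcal{C}(\Gamma)$ is determined by an admissible vertex labeling together with $n$ hyperplanes such that every one of the $2^n$ sign-changes on the corresponding lifts remains admissible. Pairwise intersection of the $n$ lifts follows by applying the 2-dimensional case above to each pair. Conversely, given $n$ pairwise intersecting lifts $\tilde\gamma_1,\ldots,\tilde\gamma_n$, I would exhibit an $n$-cube by constructing a base admissible labeling that selects, for each other lift $\tilde\mu \in \widetilde{\lambda}$, the half-space of $\tilde\mu$ consistent with a chosen ``central'' region of the arrangement $\tilde\gamma_1,\ldots,\tilde\gamma_n$; one then checks that each of the $2^n$ sign-flip combinations on $\tilde\gamma_1,\ldots,\tilde\gamma_n$ remains admissible. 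Maximal cubes match maximal pairwise-intersecting families directly, since adding a lift on either side of the correspondence is precisely adding a hyperplane crossing all the others. The principal obstacle I anticipate is this converse construction, specifically the verification that a consistent base labeling exists and that all $2^n$ flips stay admissible; this requires a careful geometric argument in $\widetilde{\Sigma}$ about how an arbitrary lift of $\Gamma$ can sit relative to a fan of $n$ pairwise intersecting lifts, and is where the bulk of Sageev's original work is concentrated.
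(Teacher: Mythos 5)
The paper does not actually prove this statement: it is quoted as background, attributed to Sageev, with the reader referred to \cite{sageev}, \cite{sageev2}, and \cite{chatterji-niblo} for details. So there is no in-paper argument to compare against; what you have written is a reconstruction of Sageev's construction itself. As such a reconstruction it is a reasonable outline, and you correctly locate the real technical burden in the converse direction of the cube correspondence (producing an $n$-cube from $n$ pairwise intersecting lifts). Two points deserve more care than your sketch gives them. First, your appeal to an ambient isotopy between minimal position realizations is a nontrivial theorem (of Hass--Scott type for collections of curves); a cleaner route to independence of realization is to observe that for minimal position realizations the crossing pattern of lifts is determined by the linking of their endpoint pairs on $\partial_\infty\widetilde{\Sigma}$, which depends only on the free homotopy classes. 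Second, your ``central region'' device is not quite the right formulation: what must be checked is that for every other lift $\tilde\mu$ one can choose a half-space meeting \emph{both} half-spaces of each $\tilde\gamma_i$ (forced to be the side of $\tilde\mu$ containing $\tilde\gamma_i$ whenever they are disjoint), that these forced choices are mutually consistent, and that the resulting vertex lies in the correct component of the complex (a finiteness condition on the number of ``misaligned'' walls relative to a canonical labeling). You flag this as the anticipated obstacle, which is honest, but it means the proposal is an outline of Sageev's proof rather than a complete proof; for the purposes of this paper that is exactly the status the statement has anyway.
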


%In what follows, we will start by imposing a hyperbolic structure on $\Sigma$. 
%With this in hand, we will refer to the elevations in $\widetilde{\lambda}$ as \emph{lifts} of curves from $\Gamma$. 
\noindent In light of Sageev's theorem we may sometimes identify the elevations in $\widetilde{\lambda}$ with the hyperplanes of the cube complex $\mathcal{C}(\Gamma)$.

Given a cube $C$ in a cube complex $\mathcal{C}$, we denote the set of hyperplanes of $\mathcal{C}$ by $\mathcal{H}(\mathcal{C})$, and the set of hyperplanes of $C$ by $\mathcal{H}(C)\subset\mathcal{H}(\mathcal{C})$.

\begin{definition}
\label{separated}
{Suppose $C$ and $D$ are two cubes in a cube complex. We say that $C$ and $D$ are \emph{hyperplane separated} if either $\mathcal{H}(C) \cap \mathcal{H}(D) = \emptyset$, or $|\mathcal{H}(C) \cap \mathcal{H}(D)| = 1$, and, for any $c\in\mathcal{H}(C)$ and $d\in\mathcal{H}(D)$ with $c\ne d$, the hyperplanes $c$ and $d$ are disjoint.}
\end{definition}

A union of cubes $\bigcup_i C_i$ is hyperplane separated when every pair is hyperplane separated.

\section{Proof of Theorem \ref{main estimate}}
\label{proof of main estimate}

Consider an $n$-cube $C\subset\mathcal{C}(\Gamma)$. The orientation of $\widetilde{\Sigma}$ induces a counter-clockwise cyclic ordering of the $n$ elevations of curves from $\Gamma$ that correspond to the hyperplanes of $C$. In what follows, we fix a hyperbolic surface $X\in\T(\Sigma)$ and identify the universal cover $\widetilde{\Sigma}$ with $\H^{2}$. We will work with the Poincar\'{e} disk model for $\H^2$, with conformal boundary $S^1$. 

Enumerate the $n$ geodesic representatives $(\gamma_{1},\ldots,\gamma_{n})$ of the elevations of curves that correspond to $C$, respecting the cyclic order. Each geodesic $\gamma_i$ has two endpoints $p_i,q_i\in S^1$. Choose these labels so that $p_1,\ldots,p_n,q_1,\ldots,q_n$ is consistent with the cyclic order of $S^1$.

Given a trio of elevations $\gamma_{i-1},\gamma_{i},\gamma_{i+1}\subset\H^{2}$, consider the pair of distinct disjoint geodesics $(p_{i-1},p_{i+1})$ and $(q_{i-1},q_{i+1})$. 
We will refer to this pair as the \emph{separators} of the hyperplane $\gamma_{i}$ in $C$, and we will denote the pair by $\sep(\gamma_{i},C)$. 
(When $i=1$ or $i=n$, the separators of $\gamma_i$ are the geodesics $(p_{i-1},q_{i+1})$ and $(q_{i-1},p_{i+1})$, with indices read modulo $n$.)
Let $\delta_{i}$ indicate the portion of $\gamma_{i}$ between the separators. We will refer to the arcs $\{\delta_{1},\ldots,\delta_{n}\}$ as the \emph{diagonals} of the cube $C$. See Figure \ref{n-cubePic2} for a schematic picture. 

\begin{figure}[h]
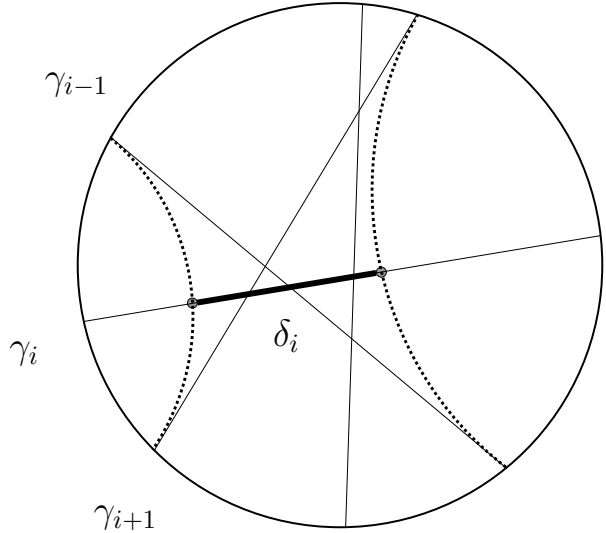

	\centering
	\begin{lpic}[clean]{n-cube2(,7cm)}
		\Large
		\lbl[]{-15,50;$\gamma_{i}$}
		\lbl[]{14,3;$\gamma_{i+1}$}
		\lbl[]{0,127;$\gamma_{i-1}$}
		\lbl[]{60,55;$\delta_{i}$}
	\end{lpic}
	\caption{The separators of the hyperplane $\gamma_{i}$ and the diagonal $\delta_{i}$.}
	\label{n-cubePic2}
\end{figure}

\begin{lemma}
\label{delta lengths}
For each $i$, we have 
\[
\ell(\delta_{i}) = \log \left| \frac{(p_i - q_{i-1} ) ( p_i - q_{i+1} )(q_i - p_{i+1})(q_i - p_{i-1})}{(p_i - p_{i+1})(p_i - p_{i-1})(q_{i} - q_{i+1})(q_i - q_{i-1})} \right|~.
\]
\end{lemma}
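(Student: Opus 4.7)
The plan is to exploit Möbius invariance of both sides of the claimed identity. The hyperbolic length $\ell(\delta_i)$ is preserved by orientation-preserving isometries of $\H^2$, which act on the ideal boundary $\partial \H^2 \cong S^1$ by Möbius transformations. Meanwhile, the expression on the right-hand side factors as a product of two cross-ratios of boundary quadruples, namely
\[
\frac{(p_i - q_{i-1})(q_i - p_{i-1})}{(p_i - p_{i-1})(q_i - q_{i-1})} \cdot \frac{(p_i - q_{i+1})(q_i - p_{i+1})}{(p_i - p_{i+1})(q_i - q_{i+1})},
\]
so its absolute value is Möbius-invariant as well. Thus it suffices to verify the identity in a convenient normalization.

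I would apply the Möbius transformation $z \mapsto (z - p_i)/(z - q_i)$ to send $p_i \mapsto 0$ and $q_i \mapsto \infty$ in the upper half-plane model, whereupon $\gamma_i$ becomes the positive imaginary axis. Using the cyclic ordering $p_1,\ldots,p_n,q_1,\ldots,q_n$, the points $p_{i+1}$ and $q_{i-1}$ lie on one open arc of $S^1 \setminus \{p_i,q_i\}$ while $p_{i-1}$ and $q_{i+1}$ lie on the other, so after normalization we may take $p_{i+1}, q_{i-1} > 0$ and $p_{i-1}, q_{i+1} < 0$ on the real line. In particular each separator has real endpoints of opposite sign, and hence genuinely crosses $\gamma_i$. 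An elementary Euclidean computation shows that the geodesic semicircle with real endpoints $a < 0 < b$ meets the imaginary axis at height $\sqrt{-ab}$.

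It follows that the two separators cross $\gamma_i$ at heights $y_1 = \sqrt{|p_{i-1}p_{i+1}|}$ and $y_2 = \sqrt{|q_{i-1}q_{i+1}|}$, and the hyperbolic distance between $iy_1$ and $iy_2$ along the imaginary axis is $|\log(y_2/y_1)|$. Transporting this identity back to general $(p_i, q_i)$ via the inverse Möbius transformation amounts to replacing each boundary point $w$ by $(w - p_i)/(w - q_i)$. Rewriting the ratio of heights in the original coordinates produces precisely the factors $(w - p_i)$ and $(w - q_i)$ with $w \in \{p_{i\pm 1}, q_{i\pm 1}\}$ that appear in the claimed formula.

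The main obstacle is careful bookkeeping: using the cyclic ordering to confirm that the separators cross $\gamma_i$ on opposite sides (so that $\delta_i$ is indeed a bounded geodesic segment), managing absolute values so that the signs of the cross-ratios do not spoil the normalization, and handling the terms involving $q_i$ via an appropriate limit at $\infty$. Once these are in place, matching the Möbius-transported cross-ratio to the right-hand side is a direct algebraic check.
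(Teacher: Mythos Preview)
Your approach is correct and is precisely the kind of computation the paper has in mind: the paper's entire proof reads ``A calculation in $\H^2$,'' and your use of M\"obius invariance to normalize $p_i\mapsto 0$, $q_i\mapsto\infty$ and then read off the intersection heights $\sqrt{-ab}$ on the imaginary axis is the natural way to carry it out. You are simply supplying the details the paper omits.
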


\begin{proof}
A calculation in $\H^2$.
\end{proof}

Towards Theorem \ref{main estimate}, we suppose below that $C_1,\ldots,C_m$ are cubes of $\mathcal{C}(\Gamma)$ in distinct $\pi_1\Sigma$-orbits. Let $\delta_1^i,\ldots,\delta_{n_i}^i$ be the diagonals of the cube $C_i$, let 
\[
\mathcal{D}_i:=\displaystyle\bigcup_{k}\delta_k^i
\]
 indicate the union of the diagonals of $C_i$, and $\mathcal{D}:=\mathcal{D}_1\cup \ldots \cup \mathcal{D}_m$.

For ease in exposition, we postpone the proof of the following proposition:

\begin{prop}
\label{portions inject}
If the union of orbits $\bigcup \pi_1 \Sigma\cdot C_i$ is hyperplane separated, then the covering map $\pi:\H^2\to\Sigma$ is injective on the union $\mathcal{D}$ minus a finite set of points.
\end{prop}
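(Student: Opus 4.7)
The plan is to analyze pairs of distinct points $x, y \in \mathcal{D}$ with $\pi(x) = \pi(y)$ and show this set is finite. Any such pair satisfies $y = gx$ for some $g \in \pi_1\Sigma \setminus \{1\}$; write $x \in \delta_k^i$ and $y \in \delta_l^j$. The hypothesis applies to the two cubes $gC_i$ and $C_j$, which both lie in the hyperplane separated union $\bigcup_\alpha \pi_1\Sigma \cdot C_\alpha$.

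First I would split on whether the underlying hyperplanes agree. If $g\gamma_k^i \ne \gamma_l^j$, then $g\delta_k^i$ and $\delta_l^j$ lie on distinct geodesics of $\H^2$, hence meet in at most one point. Otherwise $g\gamma_k^i = \gamma_l^j = h$, and $gC_i, C_j$ share the hyperplane $h$. Since $\pi_1\Sigma$ is torsion-free, no nontrivial element can stabilize a cube: such a $g$ would act by a cyclic rotation on the finitely many hyperplanes, forcing a power $g^N$ to fix each hyperplane individually, but the intersection of the infinite cyclic stabilizers of two distinct lifted closed geodesics is trivial, so $g^N = 1$, contradicting torsion-freeness. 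Consequently $gC_i \ne C_j$, so they are distinct cubes sharing only $h$, and hyperplane separation implies that every hyperplane of $gC_i$ other than $h$ is disjoint from every hyperplane of $C_j$ other than $h$.

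The main technical step is then the shared-hyperplane case: showing that $g\delta_k^i$ and $\delta_l^j$ overlap in at most one point of $h$. I would work in the upper half-plane model with $h$ placed as the vertical line $\{\Re z = 0\}$, and with $gp_k^i = p_l^j = 0$ and $gq_k^i = q_l^j = \infty$. Each adjacent hyperplane then has one endpoint on the negative real axis and one on the positive, and crosses $h$ at a height equal to the geometric mean of the two magnitudes; the separator geodesics defining $g\delta_k^i$ (resp.\ $\delta_l^j$) cross $h$ at heights determined by products of endpoint magnitudes of the adjacent hyperplanes. The unlinkedness conditions between each $g\gamma_{k\pm 1}^i$ and $\gamma_{l\pm 1}^j$ coming from hyperplane separation force coordinated inequalities among these endpoint magnitudes; combined with the linking constraints coming from the cube structure inside $gC_i$ and $C_j$, a direct case analysis shows that the interval of heights cut out on $h$ by $g\delta_k^i$ lies entirely above, or entirely below, the one cut out by $\delta_l^j$.

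With this key step in hand, the conclusion follows by proper discontinuity: each $\delta_k^i$ is a compact arc in $\H^2$, so for each of the finitely many pairs $(i,k,j,l)$ only finitely many $g \in \pi_1\Sigma$ satisfy $g\delta_k^i \cap \delta_l^j \ne \emptyset$, and each such intersection contributes at most one point by the two cases above. The main obstacle is the key step: translating the hyperplane separation condition into the numerical inequalities needed to show that the two diagonals along a shared hyperplane are geometrically disjoint.
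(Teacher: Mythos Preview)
Your overall strategy matches the paper's: use proper discontinuity to reduce to finitely many deck transformations, split on whether the two diagonals lie on a common lifted geodesic $h$, and in the shared-hyperplane case show the two sub-arcs of $h$ cannot overlap in an interval. The difference lies in how that key step is executed. The paper packages it into a short geometric lemma: call an \emph{$H$} a pair of disjoint geodesics together with a bar joining them, and its \emph{cross} the two geodesics joining opposite ideal vertices of the quadrilateral spanned by the four endpoints; the lemma asserts that if two distinct $H$'s have bars overlapping in an interval then their crosses must intersect, and it is proved by an elementary observation about how the ideal vertices of two intersecting ideal quadrilaterals interleave on $S^1$. For the diagonal $\delta_k^i$, the relevant $H$ is $\delta_k^i$ together with its separators, and its cross is exactly the pair of adjacent hyperplanes $\{\gamma_{k-1}^i,\gamma_{k+1}^i\}$. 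Hence if $g\delta_k^i$ and $\delta_l^j$ overlapped, some $g\gamma_{k\pm 1}^i$ would meet some $\gamma_{l\pm 1}^j$, directly contradicting hyperplane separation. This replaces your upper-half-plane case analysis with a coordinate-free one-paragraph argument; your route would succeed, but carrying the unlinking constraints on eight boundary points through to the required height inequalities is more laborious and hides the simple convexity reason behind it. One further minor difference: to rule out $gC_i=C_i$ for $g\ne 1$, the paper applies Brouwer's fixed-point theorem to the action of $g$ on the compact set $\mathcal{D}_i$, rather than your torsion-freeness argument; both are valid.
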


Finally, we will need the solution to the following optimization problem, whose proof we also postpone: Given $2n$ distinct points $x_1,\ldots,x_{2n}\in S^1$, for notational convenience we adopt the natural convention that subscripts should be read modulo $2n$, so that $x_{2n+1}=x_1$ and $x_0=x_{2n}$. Let $F(x_1, \ldots , x_{2n})$ indicate

\[
F(x_1, \ldots, x_{2n} ) = \log \prod_{j=1}^{2n} \left| \frac{(x_j - x_{j+n+1})(x_j - x_{j+n-1})}{(x_j - x_{j+1})(x_j - x_{j-1})} \right|~.
\]

\begin{lemma}
\label{harder optimization}
When $(x_1,\ldots,x_{2n})$ are cyclically ordered in $S^1$, we have 
\[
F(x_1,\ldots,x_{2n})\ge n \log \left( \frac{1+\cos\frac{\pi}{n}}{1-\cos\frac{\pi}{n}} \right)~.
\]
\end{lemma}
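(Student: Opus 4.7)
The plan is to reformulate $F$ in the angular gap variables and then carry out a critical-point analysis. Writing $x_j = e^{i\theta_j}$ and setting $a_j = \theta_{j+1}-\theta_j$ so that $\sum_j a_j = 2\pi$, the identity $|e^{i\alpha}-e^{i\beta}|=2|\sin((\alpha-\beta)/2)|$, combined with the observation that each cyclic-distance-$(n-1)$ chord appears twice in the numerator of the product inside $\log F$ (once from each endpoint, as a ``$+(n-1)$-step'' factor and as a ``$+(n+1)$-step'' factor) and each edge appears twice in the denominator, reduces $F$ to
\[
F \;=\; 2\sum_{j=1}^{2n}\log\sin\left(\frac{a_j+a_{j+1}+\cdots+a_{j+n-2}}{2}\right) \;-\; 2\sum_{j=1}^{2n}\log\sin(a_j/2).
\]
This is smooth on the open simplex $\Delta=\{a_j>0,\ \sum_j a_j = 2\pi\}$, and $F$ is Möbius invariant: in the original expression each $x_k$ appears in four numerator factors and four denominator factors, so the conformal-distortion Jacobian factors cancel under any Möbius transformation of the disk.

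I would then check that $F\to+\infty$ along every boundary of $\Delta$. If a cluster of $k$ consecutive gaps collapses like $\epsilon$, the $k$ terms $-2\log\sin(a_j/2)$ inside the cluster contribute $2k\,|\log\epsilon|$ to $F$, while the only vanishing sines in the numerator sum are those $b_{j'}$ whose entire range of $n-1$ consecutive summands lies inside the cluster---yielding at most $(k-n+2)$ terms contributing a magnitude of $2(k-n+2)|\log\epsilon|$ against $F$. The net behavior is $F\sim 2(n-2)|\log\epsilon|\to+\infty$ for $n\geq 3$ (the $n=2$ case is trivial since $F\equiv 0$ identically). Hence the infimum is attained in the interior.

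The Lagrange multiplier system $\partial F/\partial a_k = \mathrm{const}$ is solved, by cyclic symmetry, by the uniform configuration $a_k \equiv \pi/n$, and direct substitution gives
\[
F_{\mathrm{equal}} \;=\; 4n \log \cot(\pi/(2n)) \;=\; 2\,n \log\frac{1+\cos(\pi/n)}{1-\cos(\pi/n)},
\]
already twice the claimed bound.

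The main obstacle I anticipate is verifying that equal spacing is the global minimum modulo Möbius. I would expand the Hessian at $a_k\equiv\pi/n$ in the discrete Fourier basis for perturbations $\epsilon_j$ with $\sum \epsilon_j = 0$: the two zero modes turn out to correspond exactly to the infinitesimal Möbius motions of $S^1$, while the remaining eigenvalues are strictly positive, so equal spacing is a non-degenerate local minimum on the Möbius quotient. Combined with the boundary blow-up, a Morse-theoretic argument (or a sub-level-set compactness argument) then forces equal spacing to be the unique critical point up to Möbius, giving $F\geq F_{\mathrm{equal}}$ globally. The generous factor-of-two slack in the stated bound suggests that a less delicate convexity or rearrangement estimate might bypass the full global-uniqueness argument, and could well be what the author uses in place of a Morse argument.
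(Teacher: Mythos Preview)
Your setup is sound: the gap-variable reformulation, the M\"obius invariance, and the identification of equal spacing as a critical point with $F_{\mathrm{equal}} = 2n\log\frac{1+\cos(\pi/n)}{1-\cos(\pi/n)}$ are all correct. Your observation about the factor-of-two slack is also right; the paper's own proof locates the minimum at the same value, so the stated bound is simply weaker than what is actually established.

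The genuine gap is in your endgame. Positive-definiteness of the Hessian at equal spacing (transverse to M\"obius) together with properness does \emph{not} force equal spacing to be the unique critical orbit: Morse theory on a contractible $(2n-3)$-cell only gives $\sum_p (-1)^{\mathrm{ind}(p)}=1$, which is perfectly consistent with, say, two minima and a saddle. So ``a Morse-theoretic argument forces equal spacing to be the unique critical point'' is not justified; you still need either a convexity statement or a direct classification of \emph{all} critical points. (Your boundary analysis also needs care: on the simplex itself $F$ does not blow up along every stratum---e.g.\ for $n=3$, letting four consecutive gaps collapse symmetrically sends $F$ to a finite limit, because an extra numerator factor $\sin(b_j/2)$ with $b_j\to 2\pi$ also vanishes. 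What one really wants is properness on the M\"obius \emph{quotient}, which your single-cluster count does not quite address.)

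The paper takes the direct-classification route, and this is the step your proposal is missing. Working in the angles $\theta_j$ rather than the gaps, it uses the M\"obius freedom to impose $\theta_{j+n}-\theta_j=\pi$ for two consecutive values of $j$, and then manipulates the pair $\partial F/\partial\theta_j=0$, $\partial F/\partial\theta_{j+n}=0$ into an instance of
\[
\frac{1}{x}+\frac{1}{y}+\frac{1}{z}=\frac{1}{x+y+z},
\]
whose only real solutions are $x=-y$, $x=-z$, or $y=-z$. The ordering constraints on the $\theta$'s rule out two of these, and the surviving one propagates the antipodal relation $\theta_{k+n}=\theta_k+\pi$ to all $k$; a second short manipulation then forces all consecutive gaps to be equal. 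This shows the equally spaced configuration is the \emph{only} critical orbit, hence the global minimum. Your Hessian computation would be a pleasant sanity check but is not needed; the missing ingredient in your proposal is precisely this algebraic uniqueness argument.
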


Assuming for now Propostion \ref{portions inject} and Lemma \ref{harder optimization}, we are ready to prove Theorem \ref{main estimate}:

\begin{proof}[Proof of Theorem \ref{main estimate}]
{We bound from below the sum of lengths of the curves from $\Gamma$ in the hyperbolic structure determined by $X\in\T(\Sigma)$. Pull $\Gamma$ tight to geodesics, and consider the pre-image under the covering transformation. As described above, each cube $C_i$, of dimension $n_i$, has $n_i$ hyperplanes with $n_i$ corresponding elevations of mutually intersecting geodesics in $\H^2$. These curves determine $2n_i$ cyclically ordered distinct points $$p^i_1,p^i_2,\ldots,p^i_{n_i},q^i_1,q^i_2,\ldots,q^i_{n_i}$$ on $S^1$, the diagonals $\mathcal{D}_i$, and $\mathcal{D}$, the union of $\mathcal{D}_i$. We estimate: 

\begin{align*}
\ell(\Gamma,X) & \ge \ell(\mathcal{D}) \ = \ \sum_{i=1}^m \ell(\mathcal{D}_i) &\hspace{-3cm}\text{by Proposition \ref{portions inject}}\\
&= \sum_{i=1}^m \sum_{j=1}^{n_i} \ell(\delta_j^i) \\
& = \sum_{i=1}^m \log \prod_{j=1}^{n_i} \left| \frac{\left(p^i_j - q^i_{j-1} \right) \left( p^i_j - q^i_{j+1} \right)\left(q^i_j - p^i_{j+1}\right)\left(q^i_j - p^i_{j-1}\right)}{\left(p^i_j - p^i_{j+1}\right)\left(p^i_j - p^i_{j-1}\right)\left(q^i_j - q^i_{j+1}\right)\left(q^i_j - q^i_{j-1}\right)} \right| \\
&&\hspace{-3cm} \text{by Lemma \ref{delta lengths}}\\
& \ge \sum_{i=1}^{m} n_i \log \left( \frac{1+\cos\frac{\pi}{n_i}}{1-\cos\frac{\pi}{n_i}} \right), &\hspace{-3cm}\text{by Lemma \ref{harder optimization}.}
\end{align*}
}
\end{proof}

\section{Proof of Proposition \ref{portions inject}}
\label{proof of prop portions inject}

Proposition \ref{portions inject} is the sole motivation for the definition of `hyperplane separated'. We turn to the proof. To aid our exposition, we will say that an $H$ in $\H^{2}$ is a pair of disjoint geodesics, and a geodesic arc connecting them. Associated to an $H$ is a \emph{cross}, a pair of intersecting geodesics with the same limit points as the $H$. See Figures \ref{anH} and \ref{itsCross} for a schematic. For example, the union of the diagonal $\delta_{i}$ and the separators $\sep(\gamma_{i},C)$ form an $H$, with associated cross $\{\gamma_{i-1},\gamma_{i+1}\}$.

\begin{figure}[h]
\begin{minipage}[]{.28\linewidth}
	\centering
	\begin{lpic}[clean]{anH(3cm)}
		\lbl[]{180,250;$\mathrm{bar}$}
	\end{lpic}
	\subcaption{An $H$ and its bar.}
	\label{anH}
\end{minipage}
\hspace{.5cm}
\begin{minipage}[]{.28\linewidth}
	\centering
	\begin{lpic}[clean]{itsCross(3cm)}
	\end{lpic}
	\subcaption{The cross of an $H$.}
	\label{itsCross}
\end{minipage}
\hspace{.5cm}
\begin{minipage}[]{.28\linewidth}
	\centering
	\includegraphics[width=3cm]{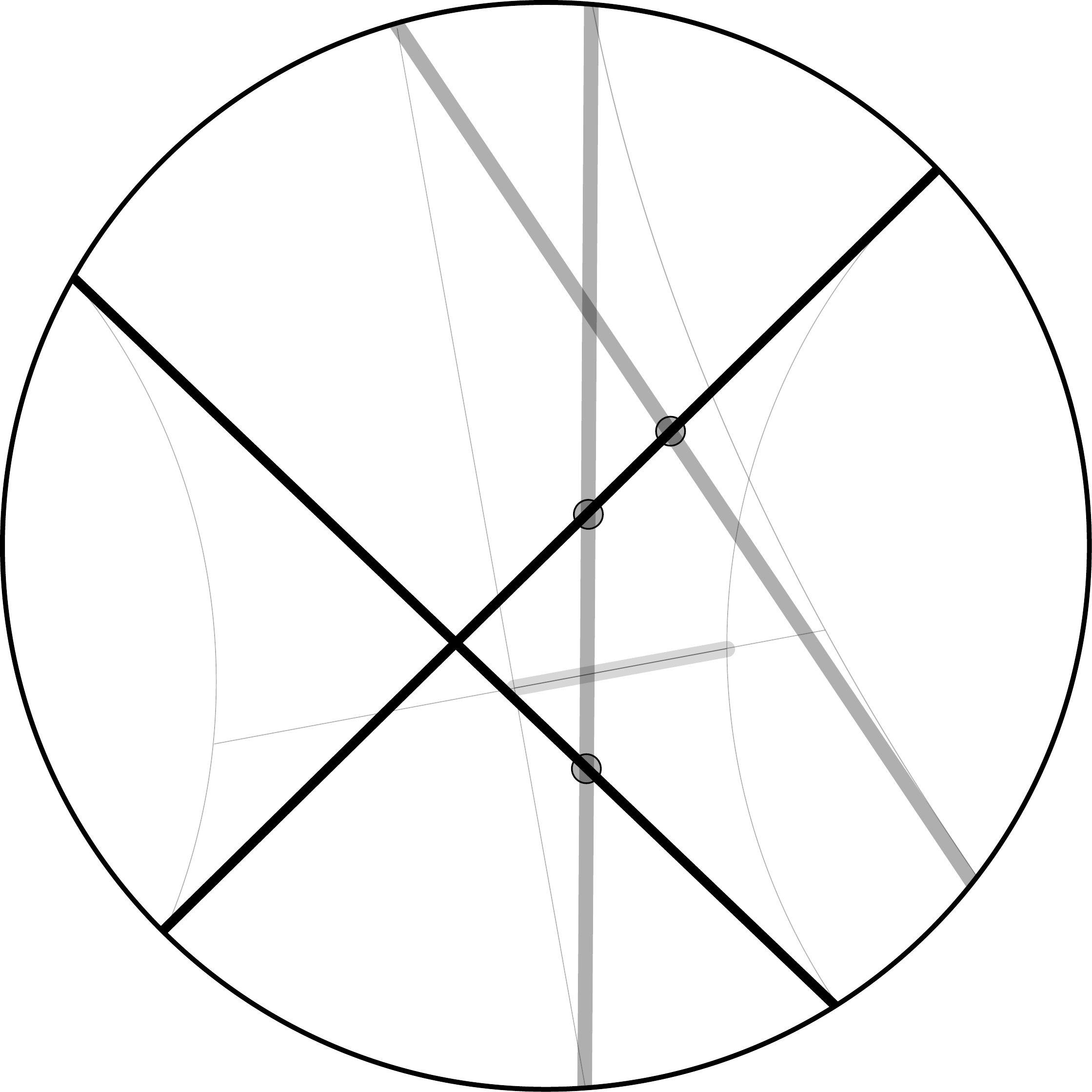}
	\subcaption{H's with overlapping bars.}
	\label{overlappingHs}
\end{minipage}
\caption{The notion of an `$H$' in $\H^2$.}
\label{Hpics}
\end{figure}

\begin{lemma}
\label{overlapping bars}
{Suppose $H_{1},H_{2}\subset\H^{2}$ are distinct $H$'s, such that their bars overlap in an interval. Then the crosses of $H_{1}$ and $H_{2}$ intersect.}
\end{lemma}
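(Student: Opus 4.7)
The plan is to prove the contrapositive: assuming the crosses of $H_1$ and $H_2$ are disjoint in $\H^2$, I will show that their bars can meet in at most finitely many points. Two geodesic arcs that overlap in an interval must lie on a common geodesic $\beta$, so I assume this from the start. The problem then translates cleanly into a statement about ideal quadrilaterals: each $H_i$ has four ideal endpoints on $S^1$ which are the vertices of an ideal quadrilateral $Q_i$; the two disjoint geodesics of the $H$ are a pair of opposite sides of $Q_i$, the cross is the pair of diagonals of $Q_i$, and the bar is the segment $\beta \cap Q_i$ (since $\beta$ enters and exits $Q_i$ through precisely those two opposite sides).

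The central combinatorial step is to show that if the diagonals of $Q_1$ and $Q_2$ are pairwise disjoint in $\H^2$, then all four vertices of $Q_2$ lie in a single one of the four arcs $\mathcal{I}_1, \ldots, \mathcal{I}_4$ of $S^1$ cut out by the vertices of $Q_1$. A geodesic with endpoints $w, w'$ fails to cross a diagonal of $Q_1$ exactly when $w, w'$ lie on the same side of that diagonal; imposing this for both diagonals of $Q_1$ forces $w, w'$ into the same arc $\mathcal{I}_j$. Applied to the two diagonals of $Q_2$, this yields $\{w_1, w_3\} \subset \mathcal{I}_j$ and $\{w_2, w_4\} \subset \mathcal{I}_k$, where $w_1, w_2, w_3, w_4$ is the cyclic order of the vertices of $Q_2$. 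A cyclic-order obstruction rules out $j \ne k$: the pattern $w_1, w_2, w_3, w_4$ would otherwise require us to enter and leave each of the disjoint connected arcs $\mathcal{I}_j, \mathcal{I}_k$ twice during a single trip around $S^1$.

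Geometrically, once all four vertices of $Q_2$ lie in a single arc $\mathcal{I}_j$, the whole ideal quadrilateral $Q_2$ is contained in the closed region of $\overline{\H^2}$ bounded by $\mathcal{I}_j$ and the side of $Q_1$ joining the endpoints of $\mathcal{I}_j$. The interior of this region is disjoint from $\mathrm{int}(Q_1)$, so $\mathrm{int}(Q_1) \cap \mathrm{int}(Q_2) = \emptyset$. The interiors of the bars $\beta \cap Q_1$ and $\beta \cap Q_2$ lie in $\mathrm{int}(Q_1)$ and $\mathrm{int}(Q_2)$ respectively, hence are disjoint. Since $\beta$ is a geodesic that meets each side of either quadrilateral at most once, the bars can only share finitely many boundary points, and in particular they cannot overlap in an interval.

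I expect the main technical hurdle to be the cyclic-order argument in the middle paragraph; the reduction to ideal quadrilaterals and the final geometric conclusion are essentially formalities once that combinatorial claim is established.
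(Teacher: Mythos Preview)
Your proof is correct and follows essentially the same route as the paper's: both arguments pass to the ideal quadrilaterals $Q_i$ (the convex hulls of the $H$'s), identify the crosses with the diagonals, and reduce the lemma to a cyclic-order statement about how the vertices of $Q_2$ sit among the four arcs of $S^1$ cut out by the vertices of $Q_1$. The only real difference is that you argue the contrapositive (crosses disjoint $\Rightarrow$ all vertices of $Q_2$ in one arc $\Rightarrow$ $\mathrm{int}(Q_1)\cap\mathrm{int}(Q_2)=\emptyset$ $\Rightarrow$ bars cannot overlap), whereas the paper argues directly (quadrilaterals intersect $\Rightarrow$ some pair of opposite vertices of $Q_2$ lie in distinct arcs $\Rightarrow$ the corresponding diagonal of $Q_2$ crosses a diagonal of $Q_1$). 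Your version makes the ``vertices forced into a single arc'' step more explicit, and your identification of the bar with $\beta\cap Q_i$ is a nice bookkeeping device, but the mathematical content is the same.
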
 

\noindent See Figure \ref{overlappingHs} for a schematic.

\begin{proof}
{The convex hull of an $H$ is an ideal quadrilateral. By assumption, the convex hulls of $H_{1}$ and $H_{2}$ intersect. The lemma follows from the following simple observation: If two ideal quadrilaterals intersect, then their crosses intersect. We demonstrate this below. Note that the ideal points of an ideal quadrilateral are cyclically ordered. We say that two such points are \emph{opposite} if they are not neighbors in the cyclic order.

Let $P$ and $Q$ be intersecting ideal quadrilaterals, with cyclically ordered ideal points $\partial P$ and $\partial Q$ in $\partial_{\infty}\H^{2}$. As $P$ and $Q$ intersect, there are two points $q,q'\in\partial Q$ lying in distinct components of $\partial_{\infty}\H^{2}\setminus \partial P$. Suppose that $q$ and $q'$ are not opposite vertices. The vertex that follows $q'$ in the cyclic order is either in the same component of $\partial_{\infty}\H^{2}\setminus \partial P$ as $q$, in which case there are three vertices in the same component as $q$, or it is in a distinct component from $q$. Thus if $P$ and $Q$ intersect, there are a pair of opposite vertices of $\partial Q$ in distinct components of $\partial_{\infty}\H^{2}\setminus \partial P$. 

Opposite vertices of an ideal quadrilateral are boundary points of the cross of the quadrilateral. Thus there is a geodesic of the cross of $Q$ that runs between distinct components of $\partial_{\infty}\H^{2}\setminus \partial P$, so that the crosses of $P$ and $Q$ intersect.}
\end{proof}

\begin{proof}[Proof of Proposition \ref{portions inject}]
{As the union $\mathcal{D}$ is compact, properness of the action of $\pi_{1}\Sigma$ ensures that there are finitely many elements $g\in\pi_{1}\Sigma$ so that $g\cdot \mathcal{D} \cap \mathcal{D} \ne \emptyset$. Let $\mathcal{D}'$ indicate the complement in $\mathcal{D}$ of the finitely many points that are transversal intersections of $\mathcal{D}$ with $g\cdot \mathcal{D}$. If $\pi$ is not injective on $\mathcal{D}'$, then there is an element $1\ne g\in\pi_{1}\Sigma$ so that $g\cdot \delta_i^k$ and $\delta_j^l$ overlap in an open interval. In particular, note that $g$ sends the hyperplane containing $\delta_i^k$ to the hyperplane containing $\delta_j^l$. 

If $k=l$ and $g\cdot C_k = C_k$, then $g\cdot \mathcal{D}_k=\mathcal{D}_k$, and by Brouwer's fixed point theorem there would be a fixed point of $g$, violating freeness of the action of $\pi_{1}\Sigma$. Since $g\cdot C_k\ne C_l$ for $k\ne l$ (recall that the cubes $\{C_i\}$ are in distinct $\pi_1\Sigma$-orbits), we may thus assume that $g\cdot C_k$ and $C_l$ are distinct cubes that share the common hyperplane containing the diagonals $g\cdot \delta_i^k$ and $\delta_j^l$. Let the hyperplanes of $C_k$ be given by $\{\gamma_1,\ldots,\gamma_{n_k}\}$, and those of $C_l$ by $\{\eta_1,\ldots,\eta_{n_l}\}$, so that $g\cdot \gamma_i = \eta_j$. 

Observe that a trivial consequence of separatedness is that the separators $\sep(\eta_j,g\cdot C_k)$ are not the same pair of geodesics as the separators $\sep(\eta_j,C_l)$: If they were identical, then $g\cdot C_k$ and $C_l$ would be two distinct cubes in the union $\bigcup \pi_1 \Sigma\cdot C_i$ that share the hyperplanes corresponding to $\gamma_{j-1}$, $\gamma_{j}$, and $\gamma_{j+1}$.

Consider then the two $H$'s formed by $g\cdot \delta_i^k$ and $\sep(\eta_j,g\cdot C_k)$ on the one hand, and $\delta_j^l$ and $\sep(\eta_j,C_l)$ on the other. By assumption these two $H$'s have overlapping bars, so that by Lemma \ref{overlapping bars} their crosses intersect. Namely, one of $g\cdot\gamma_{i-1}$ and $g\cdot\gamma_{i+1}$ intersects one of $\eta_{j-1}$ and $\eta_{j+1}$. This contradicts separatedness of $C$. We conclude that $\pi$ is injective on $\mathcal{D}'$, the union of the diagonals of $C_1,\ldots,C_m$ minus finitely many points, as desired.}
\end{proof}

\section{Proof of Lemma \ref{harder optimization}}
\label{Lemma optimization}
We solve the necessary optimization problem:

\begin{proof}[Proof of Lemma \ref{harder optimization}]
{Note that $F$ has several useful invariance properties: First it is clear that $F$ is invariant under rotations of $S^1$. More generally, the conformal automorphisms of the disk $Aut(\mathbb{D})$ act diagonally on $(S^1)^{2n}$, and for any $\sigma\in Aut(\mathbb{D})$, $F \circ\sigma=F$. As well, it is immediate from the definition that $F(x_1,x_2,\ldots,x_{2n})=F(x_2,\ldots,x_{2n},x_1)$. 

For each $j=1,\ldots,2n$, let $x_j=e^{i\theta_j}$. Applying a rotation of $S^1$ if necessary, we assume that $0\le\theta_1 < \ldots < \theta_{2n}<2\pi$.

%We compute directly that $$|e^{i\alpha} - e^{i\beta}|=\sqrt{2-2\cos(\alpha - \beta)},$$ so that
The identity $|e^{i\alpha} - e^{i\beta}|=\sqrt{2-2\cos(\alpha - \beta)}$ implies that 
\[
\log \left| \frac{e^{i \theta_j} - e^{i \theta_k}}{e^{i \theta_j} - e^{i \theta_l}} \right| =
\frac{1}{2} \log \frac{1-\cos(\theta_j - \theta_k)}{1-\cos(\theta_j - \theta_l)}~.
\]
Taking a derivative we find
\begin{align*}
\frac{\partial F}{\partial \theta_j} = \frac{\sin(\theta_j - \theta_{j+n-1})}{1-\cos(\theta_j - \theta_{j+n-1})} \ + \ & \frac{\sin(\theta_j - \theta_{j+n+1})}{1-\cos(\theta_j - \theta_{j+n+1})} \\ 
- \ \frac{\sin(\theta_j - \theta_{j-1})}{1-\cos(\theta_j - \theta_{j-1})} \ - \ &\frac{\sin(\theta_j - \theta_{j+1})}{1-\cos(\theta_j - \theta_{j+1})}~.
\end{align*}
Since $\displaystyle\frac{\sin \theta}{1-\cos \theta} = \cot \frac{\theta}{2}~,$
we may write the above as 
\begin{align*}
\frac{\partial F}{\partial \theta_j} = \cot \frac{\theta_j - \theta_{j+n-1}}{2} + \cot \frac{\theta_j - \theta_{j+n+1}}{2}
- \cot \frac{\theta_j - \theta_{j+1}}{2} - \cot \frac{\theta_j - \theta_{j-1}}{2}.
\end{align*}

Towards candidates for absolute minima of $F$, we seek solutions to the system of equations $\left\{\frac{\partial F}{\partial \theta_j}=0 \right\}$. Given the invariance properties of $F$, any such solution is far from unique, even locally. In order to characterize the unique $Aut(\mathbb{D})$-orbit of a solution, we pick a $j$, and fix the choices $\theta_{n+j}-\theta_j=\pi$, and $\theta_{n+j+1}-\theta_{j+1}=\pi$.

With $\theta_j+\pi$ substituted for $\theta_{n+j}$, the equations $\frac{\partial F}{\partial \theta_j}=0$ and $\frac{\partial F}{\partial \theta_{n+j}}=0$ now yield

\begin{align*}
 \cot \frac{\theta_j - \theta_{j+n-1}}{2} + \cot \frac{\theta_j - \theta_{j+n+1}}{2} & = \cot \frac{\theta_j - \theta_{j+1}}{2} + \cot \frac{\theta_j - \theta_{j-1}}{2}~, \text{ and }\\
 \tan \frac{\theta_j - \theta_{j-1}}{2} + \tan \frac{\theta_j - \theta_{j+1}}{2} & = \tan \frac{\theta_j - \theta_{j+n-1}}{2} + \tan \frac{\theta_j - \theta_{j+n+1}}{2}~.
\end{align*}
Eliminating $\displaystyle\tan{\frac{\theta_j - \theta_{j+1}}{2}}~,$ we find
\begin{align*}
\cot \frac{\theta_j - \theta_{j+n-1}}{2}& + \cot \frac{\theta_j - \theta_{j+n+1}}{2} - \cot \frac{\theta_j - \theta_{j-1}}{2} = \\
& \frac{1}{\tan \frac{\theta_j - \theta_{j+n-1}}{2} + \tan \frac{\theta_j - \theta_{j+n+1}}{2} - \tan \frac{\theta_j - \theta_{j-1}}{2}}~. 
\end{align*}

Recall the remarkable fact that the solutions of the equation 
\[
\frac{1}{x}+\frac{1}{y}+\frac{1}{z}=\frac{1}{x+y+z}
\] 
are precisely the equations $x=-y$, $x=-z$, or $y=-z$. As a consequence, we have one of the following:
\begin{align*}
& \tan \frac{\theta_j - \theta_{j+n-1}}{2} = -\tan \frac{\theta_j - \theta_{j+n+1}}{2}~,\\
& \tan \frac{\theta_j - \theta_{j+n-1}}{2} = \tan \frac{\theta_j - \theta_{j-1}}{2}~, \text{ or} \\
& \tan \frac{\theta_j - \theta_{j+n+1}}{2} = \tan \frac{\theta_j - \theta_{j-1}}{2}~.
\end{align*}
By assumption, 
\[
0< \theta_{j+n-1}-\theta_j < \pi < \theta_{j+n+1}-\theta_j < \theta_{j-1}-\theta_j < 2\pi~,
\] 
so that 
\[
-\pi < \frac{\theta_j - \theta_{j-1}}{2} < \frac{\theta_j - \theta_{j+n+1}}{2} < -\frac{\pi}{2} < \frac{\theta_j - \theta_{j+n-1}}{2} < 0~.
\]

The only possibility above is thus the first equation, so that 
\[
\frac{\theta_{n+j+1} - \theta_j}{2} - \pi = \frac{\theta_j - \theta_{j+n-1}}{2}~,
\]
or $\theta_{j+n+1}+\theta_{j+n-1} = 2\theta_j + 2\pi$. The equation $\frac{\partial F}{\partial \theta_j}=0$ now implies as well that $\theta_{j-1} + \theta_{j+1} = 2\theta_j + 2\pi$. 

On the other hand, we've also assumed that $\theta_{j+n+1}-\theta_{j+1}=\pi$, so that we have 
\[
\theta_{j+n-1}+\theta_{j+1} =2\theta_j + 2\pi - \theta_{j+n+1} + \theta_{j+1} = 2\theta_j + \pi~.
\]
This implies that 
\begin{align*}
\theta_{j-1}-\theta_{j+n-1} & = \theta_{j-1} - (2\theta_j+\pi-\theta_{j+1})\\
& = \theta_{j-1} - \theta_2 - (2\theta_j + \pi)\\
& = (2\theta_j + 2\pi) - (2\theta_j + \pi) = \pi~.
\end{align*}
Note that we now know that, if we make the normalizing assumptions that $\theta_{j+n}=\theta_j+\pi$ and $\theta_{j+n+1}=\theta_{j+1}+\pi$, then the equations $\left\{ \frac{\partial F}{\partial \theta_j}=0 , \frac{\partial F}{\partial \theta_{j+n}}=0 \right\}$ ensure that $\theta_{j-1}=\theta_{j+n-1}+\pi$. Using all of the equations $\left\{\frac{\partial F}{\partial \theta_j} = 0 \right\}$, it is now evident that $\theta_{n+k}=\theta_k + \pi$, for each $k=1,\ldots,n$.

We apply this understanding to the equation $\frac{\partial F}{\partial \theta_j} = 0$: 
\begin{align*}
\cot \frac{\theta_j - \theta_{j-1}}{2} + \cot \frac{\theta_j - \theta_{j+1}}{2} & = \cot \frac{\theta_j - \theta_{j+n-1}}{2} + \cot \frac{\theta_j - \theta_{j+n+1}}{2} ~,\\
& = \cot \frac{\theta_j - \theta_{j-1} - \pi}{2} + \cot \frac{\theta_j - \theta_{j+1} - \pi}{2}~,\\
& = - \tan \frac{\theta_j - \theta_{j-1}}{2} - \tan \frac{\theta_j - \theta_{j+1}}{2}~,
\end{align*}
so that
\[
-\tan \frac{\theta_j - \theta_{j-1}}{2} - \cot \frac{\theta_j - \theta_{j-1}}{2} = \tan \frac{\theta_j - \theta_{j+1}}{2} + \cot \frac{\theta_j - \theta_{j+1}}{2}~.
\]
Since $\displaystyle \tan x+\cot x = \frac{2}{\sin 2x}$, we obtain
\[
\sin \left( \theta_j - \theta_{j-1} \right) = \sin \left( \theta_{j+1} - \theta_j \right)~.
\]
If $\theta_j - \theta_{j-1} = \pi - (\theta_{j+1} - \theta_j)$, then $\theta_{j+1} - \theta_{j-1} = \pi$. On the other hand, $\theta_{j+n}=\theta_j + \pi$, and the $\theta_j$ are distinct. Thus $\theta_j - \theta_{j-1} = \theta_{j+1} - \theta_j$, for each $j=1,\ldots,2n.$ Setting $\theta_1=0$, we see that $\left(1,e^{\frac{\pi i}{n}}, e^{\frac{2\pi i}{n}}, \ldots, e^{\frac{(2n-1)\pi i}{n}} \right)$ is the unique $Aut(\mathbb{D})$-orbit for which the partial derivatives simultaneously vanish. As it is evident that $F(x_1,\ldots,x_{2n})$ goes to $+\infty$ as points $x_j$ and $x_{j+1}$ collide, the absolute minimum of $F$ must occur at a simultaneous zero of its partial derivatives. Evaluating $F \hspace{-.05cm} \left(1,e^{\frac{\pi i}{n}}, e^{\frac{2\pi i}{n}}, \ldots, e^{\frac{(2n-1)\pi i}{n}} \right)$ achieves the result.}\end{proof}

\section{Bigons and triangles}
\label{immersed bigon section}

Towards Theorem \ref{ribbon graph estimate}, for the computation of the self-intersection number of a self-intersecting closed curve, we require a slight generalization of the `bigon criterion' of \cite{farb-margalit}. Recall that a representative $\lambda$ of a collection of closed curves $\Gamma$ is in \emph{minimal position} if its intersection points are transverse, and the number of intersections of $\lambda$, counted with multiplicity, is minimal among representatives of $\Gamma$. A \emph{monogon} is a polygon with one side and a \emph{bigon} is a polygon with two sides.

\begin{definition}
%A representative $\lambda$ of a collection of closed curves $\Gamma\subset \Sigma$ has an \emph{immersed monogon} if there is an immersion of a monogon whose boundary arc is contained in $\lambda$, and it has an \emph{immersed bigon} if there is an immersion of a bigon whose boundary arcs are contained in  $\lambda$.
A representative $\lambda$ of a collection of closed curves $\Gamma\subset \Sigma$ has an \emph{immersed monogon} if there is an immersion of a monogon whose boundary arc is contained in $\lambda$, and it has an \emph{immersed bigon} if there is such an immersion of a bigon.
\end{definition}

%\begin{lemma}
%\label{min pos immersed bigons}
%The representative $\lambda$ of a collection of closed curves on $\Sigma$ is in minimal position if and only if it is without immersed monogons and without immersed bigons.
%\end{lemma}
%%%%%%%%%%%%%%%
%CHANGED IN CORRECTION
%%%%%%%%%%%%%%%
\begin{lemma}
\label{min pos immersed bigons}
If the representative $\lambda$ of a collection of closed curves on $\Sigma$ is without immersed monogons and without immersed bigons, then it is in minimal position.
\end{lemma}

%%%%%%%%%%%%%%%
%ADDED IN CORRECTION
%%%%%%%%%%%%%%%
\begin{remark}
An error in a previous version of this lemma was pointed out by Ian Biringer, as well as a reference to a very similar statement due to Hass and Scott. The corrected statement is above (cf.~\cite[Thms.~3.5,4.2]{hass-scott}). 
As they note, a non-primitive curve demonstrates that the converse is false \cite[p.~94]{hass-scott}.
\end{remark}

\begin{proof}
%The forward implication is obvious. 
%For the opposite implication, s
%%%%%%%%%%%%%%%
%CHANGED IN CORRECTION
%%%%%%%%%%%%%%%
Suppose $\lambda$ is without immersed bigons or monogons, and has $n$ transverse self-intersections. Let $G_\lambda\subset \Sigma$ indicate the graph determined by $\lambda$, choose a spanning tree $T_\lambda$ for $G_\lambda$, a lift of $T_\lambda$ to the universal cover, and a representative of each of the $n$ intersection points. At each of these representative intersection points, the pre-image of $\lambda$ in $\widetilde{\Sigma}$ consists of a pair of linked curves: If there was only one curve the covering map would produce an immersed monogon for $\lambda$ on $\Sigma$, and if the pair of curves at this intersection point were not linked the covering map would produce an immersed bigon for $\lambda$ on $\Sigma$. The self-intersection number of $\Gamma$ is equal to the number of $\pi_1\Sigma$-orbits of linked elevations of curves from $\Gamma$ in the universal cover $\widetilde{\Sigma}$, so we are done.
\end{proof}

In order to recognize the presence of hyperplane separated cubes in the dual cube complex of $\Gamma$, we will employ:

\begin{lemma}
\label{suff condition for separated}
{Suppose that $\Gamma$ has a %minimal position 
%%%%%%%%%%%%%%%
%CHANGED IN CORRECTION
%%%%%%%%%%%%%%%
realization $\lambda\subset\Sigma$ without immersed bigons or monogons, and has $k$ points of transverse self-intersection of orders $n_1,\ldots,n_k$, listed with multiplicity. Then $\mathcal{C}(\Gamma)$ has cubes of dimensions $n_1,\ldots,n_k$, with multiplicity. Moreover, the $\pi_1\Sigma$-orbit of the union of these cubes is hyperplane separated for the action of $\pi_1\Sigma$ on $\mathcal{C}(\Gamma)$ if and only if the complement $\Sigma \setminus \lambda$ has no triangles.}
\end{lemma}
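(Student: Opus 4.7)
The plan is to prove the two assertions (existence of cubes, and the separation equivalence) separately, and to handle the equivalence by contrapositives in both directions.

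For the cube count, I would apply Sageev's correspondence directly: at each transverse self-intersection point $p_i$ of order $n_i$, pick a lift $\tilde p_i \in \widetilde\Sigma$; the $n_i$ branches of $\lambda$ through $p_i$ lift to $n_i$ curves through $\tilde p_i$ that pairwise intersect there, producing an $n_i$-cube $C_i$ in $\mathcal{C}(\Gamma)$. Distinctness of the cubes coming from distinct $\tilde p_i$ is immediate: if two such lifts gave rise to the same collection of pairwise intersecting lifts, a pair of lifts of curves would share two intersection points, bounding a bigon in $\widetilde\Sigma$ whose image in $\Sigma$ is an immersed bigon, contradicting Lemma \ref{min pos immersed bigons}.

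For the forward direction of the equivalence, suppose $\Sigma \setminus \lambda$ contains a triangle complementary region $\Delta$. Lift $\Delta$ (a contractible component of $\Sigma \setminus \lambda$) homeomorphically to a triangle $\tilde\Delta \subset \widetilde\Sigma$ whose boundary consists of arcs of three lifts $\tilde\alpha, \tilde\beta, \tilde\gamma$ meeting at three vertices $\tilde u, \tilde v, \tilde w$, each a lift of a self-intersection point of $\lambda$. The cubes at $\tilde u$ and $\tilde v$ share the hyperplane $\tilde\alpha$ (the side joining those vertices), but the hyperplane $\tilde\gamma$ of the first cube meets the hyperplane $\tilde\beta$ of the second at $\tilde w$, violating Definition \ref{separated}.

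For the backward direction, suppose the $\pi_1\Sigma$-orbit union of the cubes fails to be hyperplane separated, witnessed by cubes $C, D$ in the orbit, associated to intersection point lifts $\tilde p_C, \tilde p_D$. If $|\mathcal{H}(C) \cap \mathcal{H}(D)| \ge 2$, two lifts pass through both $\tilde p_C$ and $\tilde p_D$, bounding a bigon in $\widetilde\Sigma$ that projects to an immersed bigon in $\Sigma$, again contradicting Lemma \ref{min pos immersed bigons} and the minimal position of $\lambda$. Otherwise $C$ and $D$ share a single hyperplane $\tilde\alpha$, and some $\tilde\gamma \in \mathcal{H}(C)$ meets some $\tilde\eta \in \mathcal{H}(D)$ at a point $\tilde r$. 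Together with $\tilde\alpha$, the lifts $\tilde\gamma$ and $\tilde\eta$ bound a triangle $T \subset \widetilde\Sigma$ with vertices $\tilde p_C, \tilde p_D, \tilde r$. I would then pass to an innermost such triangle: if another lift $\tilde\mu$ crosses the open interior of $T$, it must enter and exit transversely across two of the three boundary arcs, cutting off a strictly smaller triangle bounded by three lifts and having the same structure. Iteration terminates by properness of the $\pi_1\Sigma$-action (finitely many lifts cross the compact $T$), producing a triangle whose interior is disjoint from $\tilde\lambda$. This triangle is then a component of $\widetilde\Sigma \setminus \tilde\lambda$ and projects homeomorphically to a triangle complementary region in $\Sigma \setminus \lambda$, contradicting the hypothesis.

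The main obstacle is the innermost argument in the last case: one must check that replacing $T$ by the sub-triangle cut off by $\tilde\mu$ strictly reduces some well-founded quantity (I would use the number of $\pi_1\Sigma$-translates of lifts crossing the interior), and that the resulting innermost triangle genuinely descends to an embedded triangular complementary region in $\Sigma$ rather than a merely immersed one. Embeddedness of the projection follows from the fact that each complementary region of $\tilde\lambda$ in the universal cover is simply connected and maps homeomorphically to its image in $\Sigma$.
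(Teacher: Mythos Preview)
Your proposal is correct and follows essentially the same route as the paper: lift intersection points to get cubes via Sageev's correspondence, lift a complementary triangle to witness failure of separation, and for the converse form a triangle in $\widetilde\Sigma$ from the offending hyperplanes and pass to an innermost one. You supply more detail than the paper in two places---the case $|\mathcal{H}(C)\cap\mathcal{H}(D)|\ge 2$ (which you correctly dispose of via a bigon and Lemma~\ref{min pos immersed bigons}) and the termination of the innermost-triangle iteration---while the paper simply asserts the innermost triangle ``must embed under the covering map''; your justification via simple connectivity of the triangular region is the right idea, though note it is the compact triangular region specifically, not an arbitrary complementary region of $\tilde\lambda$, that is guaranteed to be simply connected.
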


\begin{proof}
{Consider the pre-image $\widetilde{\lambda}:=\pi^{-1}\lambda\subset\widetilde{\Sigma}$, and choose lifts $p_1,\ldots,p_k$ of the self-intersection points of $\lambda$, where $p_i$ has order $n_i$. By hypothesis, there are $n_i$ linked elevations from $\widetilde{\lambda}$ through $p_i$, so that there is an $n_i$-cube in $\mathcal{C}(\Gamma)$. We denote this $n_i$-cube corresponding to the choice of lift $p_i$ by $C_i$.

If the complement $\Sigma \setminus \lambda$ had a triangle, then this triangle would lift to $\widetilde{\Sigma}$, so that the curves corresponding to $C_i$, for some $i$, would contain two sides of the lifted triangle. As a consequence, there would be a different lift $p'$ of one of the intersection points, so that $p'$ is also abutting this triangle. Let $C'$ indicate the maximal cube corresponding to the lift $p'$. By construction, $C'$ shares a hyperplane with $C_i$, while there are two other hyperplanes, one of $C_i$ and one of $C'$, that intersect. As $C'$ is in the same $\pi_1\Sigma$-orbit as $C_j$, for some $j$, the union of orbits $\bigcup\pi_1\Sigma\cdot C_i$ is not hyperplane separated.

Finally, suppose the union of orbits is not hyperplane separated. Then there is $g\in\pi_1\Sigma$ so that $C_i$ and $g\cdot C_i$ share a hyperplane $\gamma$, and have a pair of other intersecting hyperplanes, say $\gamma_{1}$ and $\gamma_{2}$. In this case, there is a triangle $T\subset\widetilde{\Sigma}$ formed by $\gamma$, $\gamma_{1}$ and $\gamma_{2}$. While this triangle may not embed under the covering map, it contains an innermost triangle, namely a triangle in the complement of $\widetilde{\Sigma}\setminus\widetilde{\lambda}$. This triangle must embed under the covering map, so that $\Sigma\setminus\lambda$ contains a triangle.}
\end{proof}

\section{Closed curves from ribbon graphs}
\label{ribbon section}

We now prove Theorem \ref{ribbon graph estimate}, obtaining explicit constructions of curves to which Theorem \ref{main estimate} applies. Recall that a \emph{ribbon graph} is a graph with a cyclic order given to the oriented edges incident to each vertex, and a ribbon graph is \emph{even} if the valence of each edge is even. In what follows, we introduce notation for even ribbon graphs and analyze the closed curves that they determine.

Let $S(n)$ indicate the union of the $n$ line segments 
\[ 
\left\{ \ t\exp(\pi im/n):t\in[-1,1] \ \right\}~,
\] 
for $m=1,\ldots,n$, and label the endpoints $\exp(\pi im/n)$ and $-\exp(\pi im/n)$ by $a_{m}$ and $a_{m}'$, respectively. Fix the permutation $\mu$ of the endpoints given by $\mu(a_m)=a_m'=\mu^{-1}(a_m)$, for $m=1,\ldots,n$. We refer to $S(n)$ below as a `star', and $\mu$ as the `switch' map of the star.

Let $\underline{n}$ indicate the tuple $(n_1,\ldots,n_k)$, and consider the union $S\hspace{-.05cm}\left(\underline{n}\right):=S(n_1)\sqcup\ldots\sqcup S(n_k)$. Let $\sigma$ be a fixed-point-free, order two permutation (that is, a `pairing') of the set 
\[
\left\{ \ a_{j,i},a_{j,i}' \ | \ 1 \le j \le n_i, 1 \le i \le k \ \right\}~.
\]
Let $\Gamma\hspace{-.05cm}\left(\underline{n},\sigma\right)$ indicate the graph given by $$\Gamma\hspace{-.05cm}\left(\underline{n},\sigma\right) := S\hspace{-.05cm}\left(\underline{n}\right) / \sim,$$
where $a_{j,i} \sim \sigma (a_{j,i} )$. The vertices of $\Gamma\hspace{-.05cm}\left(\underline{n},\sigma\right)$ are in bijection with the stars $S(n_j)$, and the orientation of $\C$ at $0$ induces a cyclic order to the vertex contained in each $S(n_j)$. These orientations give $\Gamma\hspace{-.05cm}\left(\underline{n},\sigma\right)$ the structure of an even ribbon graph. Moreover, it is clear that every even ribbon graph can be constructed in this way.

Let $\Sigma\hspace{-.05cm}\left(\underline{n},\sigma\right)$ be the surface with boundary associated to the ribbon graph $\Gamma\hspace{-.05cm}\left(\underline{n},\sigma\right)$. We identify $\Gamma\hspace{-.05cm}\left(\underline{n},\sigma\right)$ as smoothly\footnote{Note that the smooth structure of $\Gamma\hspace{-.05cm}\left(\underline{n},\sigma\right)$ in a neighborhood of its vertices is induced by viewing $S(n_j)$ as an immersed submanifold of $\C$.} and incompressibly embedded in $\Sigma\hspace{-.05cm}\left(\underline{n},\sigma\right)$, so that the embedding induces isomorphisms on the level of fundamental groups. By a \emph{closed curve} in $\Gamma\hspace{-.05cm}\left(\underline{n},\sigma\right)$, we mean the free homotopy class of the image of a smooth immersion of $S^{1}$ into $\Gamma\hspace{-.05cm}\left(\underline{n},\sigma\right)$.

\begin{lemma}
\label{closed curve}
{Closed curves in $\Gamma\hspace{-.05cm}\left(\underline{n},\sigma\right)$ are in correspondence with fixed cycles of $(\mu\sigma)^l$, for $l>0$. The closed curves in $\Gamma\hspace{-.05cm}\left(\underline{n},\sigma\right)$ are in minimal position in $\Sigma\hspace{-.05cm}\left(\underline{n},\sigma\right)$, and the total intersection number of these closed curves is given by $\binom{n_1}{2}+\binom{n_2}{2}+\ldots+\binom{n_k}{2}$.}
\end{lemma}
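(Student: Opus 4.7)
The plan is to handle the three assertions in order, with the second and third linked via Lemma~\ref{min pos immersed bigons}. For the bijection in the first assertion, I would track an immersed circle through $\Gamma(\underline{n},\sigma)$: entering a star $S(n_j)$ at endpoint $a$ forces the exit to be $\mu(a)$ by the ``go straight'' rule, and the identification along the adjacent edge then carries $\mu(a)$ to $\sigma\mu(a)$, the entry-endpoint into the next star. Thus successive entry-endpoints along the curve form an orbit of $\sigma\mu$ (equivalently a cycle of $\mu\sigma$), and an orbit of size $l$ yields a closed curve that first closes up after $l$ edge-crossings, which explains the phrasing ``fixed cycles of $(\mu\sigma)^l$''.

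For minimal position, I would invoke Lemma~\ref{min pos immersed bigons} and rule out immersed bigons. The key observation is that $\Sigma(\underline{n},\sigma)$ deformation retracts to $G$, so by lifting this retraction the universal cover $\widetilde\Sigma(\underline{n},\sigma)$ deformation retracts onto the preimage $\widetilde G$; since $\widetilde\Sigma$ is simply connected, $\widetilde G$ is a tree. Suppose $\lambda$ admits an immersed bigon with corners at self-intersection points $p,q$ and boundary arcs $\alpha_1,\alpha_2$. Lifting the bigon to $\widetilde\Sigma$ produces two arcs $\widetilde\alpha_1,\widetilde\alpha_2$ in $\widetilde G$ joining common lifts $\widetilde p,\widetilde q$. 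Since each boundary arc is smoothly immersed it cannot backtrack, so in the tree $\widetilde G$ the two arcs must coincide with the unique non-backtracking path from $\widetilde p$ to $\widetilde q$; in particular they leave $\widetilde p$ along the same first edge. This contradicts the bigon-corner condition at $\widetilde p$, where the two arcs must emanate along different half-edges so that distinct strand-branches of $\lambda$ meet transversally.

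With minimal position in hand, the self-intersection number of $\Gamma$ equals the number of transverse double points of $\lambda$, counted with multiplicity $\binom{d}{2}$ at each $d$-fold crossing. All crossings occur at vertices of $G$; the vertex in $S(n_j)$ has $n_j$ strands pairwise transverse, contributing $\binom{n_j}{2}$, so summing over the $k$ stars yields $\binom{n_1}{2}+\cdots+\binom{n_k}{2}$. The main obstacle is the bigon argument: the delicate point is to reconcile tree-uniqueness of paths with the local geometry of a bigon corner, which the non-backtracking property of smooth immersions enables. Once this is in place, all three assertions follow cleanly.
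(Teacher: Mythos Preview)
Your proposal is correct and follows the same outline as the paper: the first assertion is unwound combinatorially, and the second is reduced via Lemma~\ref{min pos immersed bigons} to ruling out immersed bigons, after which the intersection count is immediate. The only difference is in how the bigon is excluded: the paper simply notes that the complement $\Sigma(\underline{n},\sigma)\setminus\Gamma(\underline{n},\sigma)$ contains no disks (since $\Gamma$ is a spine of its ribbon surface) and concludes there are no immersed bigons; your tree argument in the universal cover is exactly the unpacking of that one-line implication, using that ``no complementary disks'' is equivalent to $\pi_1$-injectivity of $\Gamma\hookrightarrow\Sigma$, so a null-homotopic bigon boundary would force backtracking against the go-straight rule.
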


\begin{proof}
The first statement is evident. The second follows from Lemma \ref{min pos immersed bigons}, since the complement $\Sigma\hspace{-.05cm}\left(\underline{n},\sigma\right)\setminus \Gamma\hspace{-.05cm}\left(\underline{n},\sigma\right)$ contains no disks, and hence no immersed bigons or monogons. 
%%%%%%%%%%%%%%%
%ADDED IN CORRECTION
%%%%%%%%%%%%%%%
\end{proof}

To obtain closed curves on closed surfaces, one may glue together $\Sigma\hspace{-.05cm}\left(\underline{n},\sigma\right)$ and another (possibly disconnected) surface along its boundary. Some of the components that are glued may be disks, so it is possible that the curves from $\Gamma\hspace{-.05cm}\left(\underline{n},\sigma\right)$ are no longer in minimal position. While \S\ref{immersed bigon section} can be used to build an algorithm that can be applied on a case-by-case basis, a more straightforward control on this phenomenon can be obtained in many cases:

\begin{lemma}
\label{ribbon graphs minimal position}
Suppose that $\hat{\Sigma}$ is a surface obtained by a gluing of $\Sigma\hspace{-.05cm}\left(\underline{n},\sigma\right)$, so that there is a natural inclusion $\Sigma\hspace{-.05cm}\left(\underline{n},\sigma\right) \into \hat{\Sigma}$. If the complement $\hat{\Sigma} \setminus \Gamma\hspace{-.05cm}\left(\underline{n},\sigma\right)$ contains no monogons, bigons, or triangles, then the closed curve $\Gamma\hspace{-.05cm}\left(\underline{n},\sigma\right) \subset \hat{\Sigma}$ is in minimal position.
\end{lemma}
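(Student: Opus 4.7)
Plan. The proof is by contradiction using the immersed-bigon criterion of Lemma~\ref{min pos immersed bigons}. Suppose that $\Gamma := \Gamma(\underline{n},\sigma)$ is not in minimal position in $\hat\Sigma$. Then there is an immersed bigon $f\colon D^2 \to \hat\Sigma$ with boundary arcs on $\Gamma$. Lift $f$ to the universal cover $\widetilde{\hat\Sigma}$: the boundary of $\tilde f$ lies on two distinct lifts $\tilde\gamma_1, \tilde\gamma_2$ of curves in $\Gamma$. Since such lifts are embedded bi-infinite arcs and the two lifted corners are distinct, $\tilde f(\partial D^2)$ is a simple closed loop, and by the Jordan curve theorem in the simply connected surface $\widetilde{\hat\Sigma}$ it bounds an embedded bigon region $\tilde B$.

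The combinatorial heart of the argument is an Euler-characteristic count on $\tilde B$. I would consider the cell decomposition of the disk $\tilde B$ induced by its intersection with $\widetilde\Gamma := \pi^{-1}(\Gamma)$. Each curve-lift passing through $\tilde B$ is bi-infinite, so it enters and exits via $\partial \tilde B$; suppose there are $n$ such interior arcs, with $c$ pairwise transverse intersections inside $\tilde B$. A direct count yields $V = 2+2n+c$ vertices, $E = 2+3n+2c$ edges, and $F = 1+n+c$ faces, so the total-degree formula $\sum_F \mathrm{sides}(F) = 2E - E_\partial = 4F-2$ forces some face $P$ to have at most three sides. Because curve-lifts are embedded---so no edge in the decomposition is a loop based at a single vertex---$P$ cannot be a monogon, and so $P$ is a bigon or triangle in $\widetilde{\hat\Sigma}\setminus\widetilde\Gamma$.

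The remaining step is to verify that $P$ projects to a bigon or triangle complementary region of $\hat\Sigma\setminus\Gamma$. Since $P$ is a compact disk with interior disjoint from $\widetilde\Gamma$, the argument from the last paragraph of the proof of Lemma~\ref{suff condition for separated}---Brouwer's fixed-point theorem applied to the free action of $\pi_1\hat\Sigma$ on $\widetilde{\hat\Sigma}$---shows that $\pi|_{\bar P}$ is injective. Hence $\pi(P)$ is an embedded bigon or triangle in $\hat\Sigma \setminus \Gamma$, contradicting the hypothesis.

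The step requiring the most care is the last: ensuring that $P$ genuinely embeds under the covering projection. Brouwer rules out setwise stabilizers of $P$ in $\pi_1\hat\Sigma$, and the disjoint-interior structure of complementary regions of $\widetilde\Gamma$ in $\widetilde{\hat\Sigma}$ rules out boundary-only identifications of $P$ with any non-trivial translate $g\cdot P$, so $\pi(P)$ is a genuine bigon or triangle rather than a self-glued disk.
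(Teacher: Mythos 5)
Your proof takes a genuinely different route from the paper's. The paper's argument works entirely in the \emph{domain} bigon $B$: it considers the preimage $\phi^{-1}(\Gamma)$ inside $B$, which consists of $\partial B$ together with interior arcs, and by an induction on the number of such arcs finds a complementary bigon or triangle in $B$, which is then pushed forward by $\phi$. You instead lift the immersion to the universal cover, view the image as a region cut up by the embedded lines of $\widetilde\Gamma$, and use an Euler-characteristic count ($\sum_F \mathrm{sides}(F)=4F-2<4F$) to force a face with at most three sides. Your count is correct and arguably more transparent than the paper's unspecified induction; passing to the universal cover also has the advantage that the pieces of $\widetilde\Gamma$ you encounter are automatically embedded arcs, whereas the paper must justify (and does not) that $\phi^{-1}(\Gamma)$ is a disjoint union of arcs joining opposite sides of $B$. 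Both proofs end by invoking essentially the same embedding step for the innermost region, which the paper treats in one sentence.

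Two details in your write-up deserve a closer look. First, you assert that $\tilde f(\partial D^2)$ is a simple closed curve because the lifts are embedded and the two lifted corners are distinct, but the two lifts $\tilde\gamma_1,\tilde\gamma_2$ could a priori cross in more than two points, in which case the two boundary arcs of $\tilde f(\partial D^2)$ would intersect away from their endpoints. This is fixable either by first passing to an innermost subdisc, or more simply by running your Euler-characteristic count in the domain disc $D^2$ against $f^{-1}(\Gamma)$ rather than in the universal cover. Second, your final sentence is not quite right as reasoning: ``disjoint-interior structure of complementary regions'' is exactly the situation in which $g\cdot\bar P$ and $\bar P$ could meet along boundary edges only, so it does not by itself rule out boundary-only identifications. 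What actually rules them out is that such an identification of sides of a bigon or triangle $P$ would force the deck transformation $g$ to permute the corner set of $P$ without fixed points, giving $g^2=1$ or $g^3=1$; torsion-freeness of $\pi_1\hat\Sigma$ then forces $g=1$, a contradiction. (Also a small citation slip: the Brouwer fixed-point argument you reference appears in the proof of Proposition~\ref{portions inject}, not Lemma~\ref{suff condition for separated}.) With those two repairs your argument is complete and gives a clean alternative to the paper's induction.
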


\begin{proof}
Suppose that $\Gamma\hspace{-.05cm}\left(\underline{n},\sigma\right)\subset \hat{\Sigma}$ is not in minimal position, so that by Lemma \ref{min pos immersed bigons} it has either an immersed monogon or bigon. Suppose that $\phi: B\into \hat\Sigma$ is an example of the latter. By assumption, the bigon is not embedded. Thus $\phi^{-1}(\Gamma\hspace{-.05cm}\left(\underline{n},\sigma\right))$ consists of the two sides of $B$, together with some connected arcs that connect opposite sides of the bigon $B$. It is easy to see by induction on the number of such arcs that the complement in $B$ must contain either a bigon or a triangle. This triangle embeds under $\phi$, violating the assumption that $\hat{\Sigma} \setminus \Gamma\hspace{-.05cm}\left(\underline{n},\sigma\right)$ contains no triangles. The case of an immersed monogon is straightforwardly similar.
\end{proof}

Lemmas \ref{closed curve}, \ref{ribbon graphs minimal position} and \ref{suff condition for separated} imply Theorem \ref{ribbon graph estimate} directly. 

\section{A family of examples and Theorem \ref{corollary of main thm}}
\label{examples section}

Towards Theorem \ref{corollary of main thm}, for each $k$ let $\underline{\tau}_k$ indicate the sequence $(3,\ldots,3)$ with $k$ terms. The vertices of $S(\underline{\tau}_k)$ are given by 
\[
\left\{ \ a_{1,j},a_{2,j},a_{3,j},a_{1,j}',a_{2,j}',a_{3,j}' \ | \ 1\le j \le k \ \right\}~.
\]
Let $\sigma$ indicate the following pairing:
\begin{align*}
a_{2,j} & \longleftrightarrow a_{1,j}'~, \\
a_{3,j} & \longleftrightarrow a_{2,j}'~, && \text{for }j=1,\ldots,k, \\
a_{1,j} & \longleftrightarrow a_{3,j+1}'~, && \text{for }j=1,\ldots,k-1, \text{ and} \\
a_{1,k} & \longleftrightarrow a_{3,1}'~.
\end{align*}

See Figure \ref{Gamma_tau_k} for a schematic picture of $\Gamma\hspace{-.05cm}\left(\underline{\tau}_k,\sigma\right)$, and Figure \ref{Gamma_tau_6_gluing} for a gluing of $\Sigma\hspace{-.05cm}\left(\underline{\tau}_6,\sigma\right)$ to which Proposition \ref{suff condition for separated} and Lemma \ref{ribbon graphs minimal position} apply.

\begin{figure}[h]
	\centering
	\includegraphics[width=8cm]{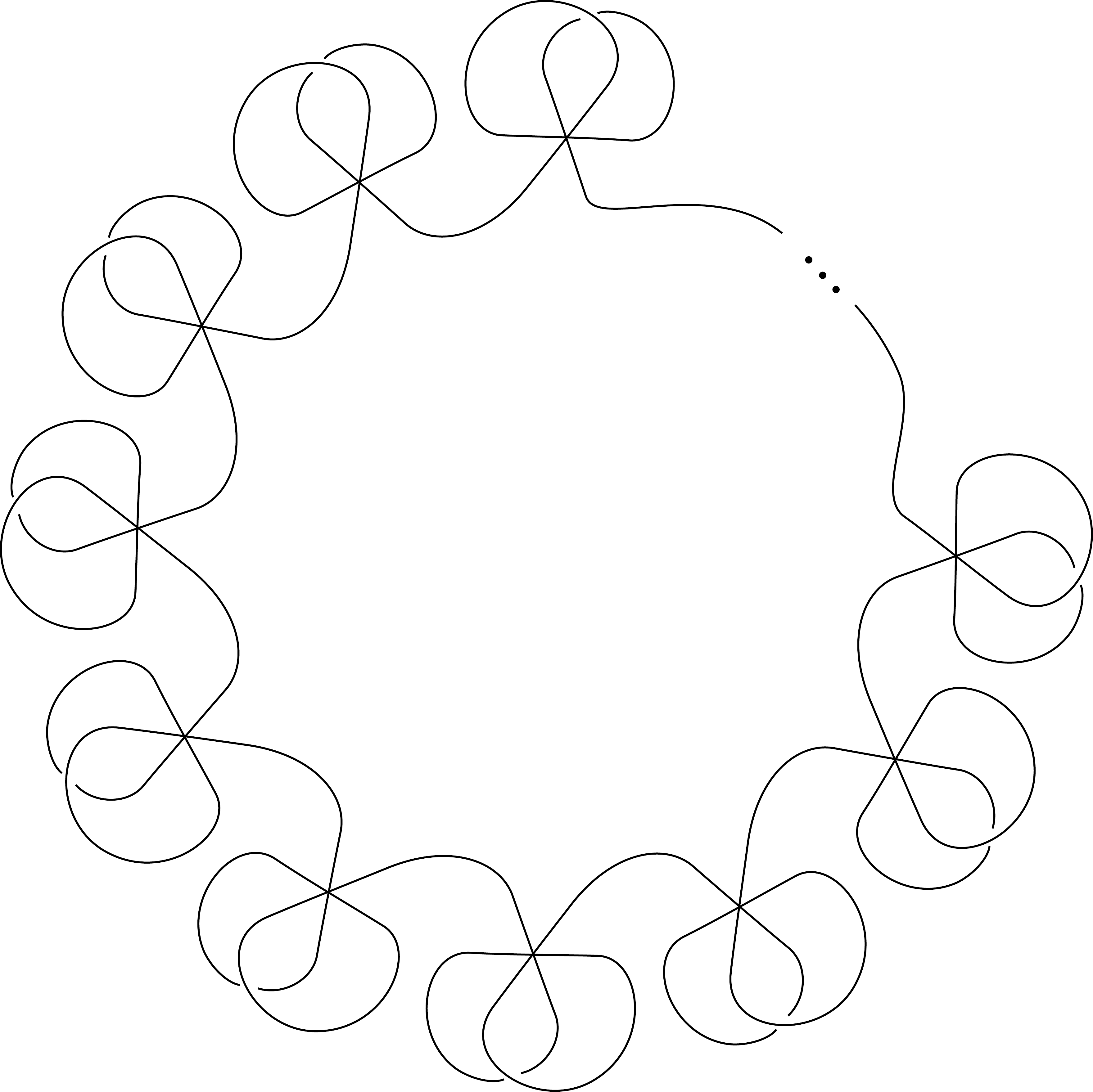}
	\caption{The ribbon graph $\Gamma\hspace{-.05cm}\left(\underline{\tau}_k,\sigma\right)$.}
	\label{Gamma_tau_k}
\end{figure}

\begin{figure}[h]
	\centering
	\includegraphics[width=8cm]{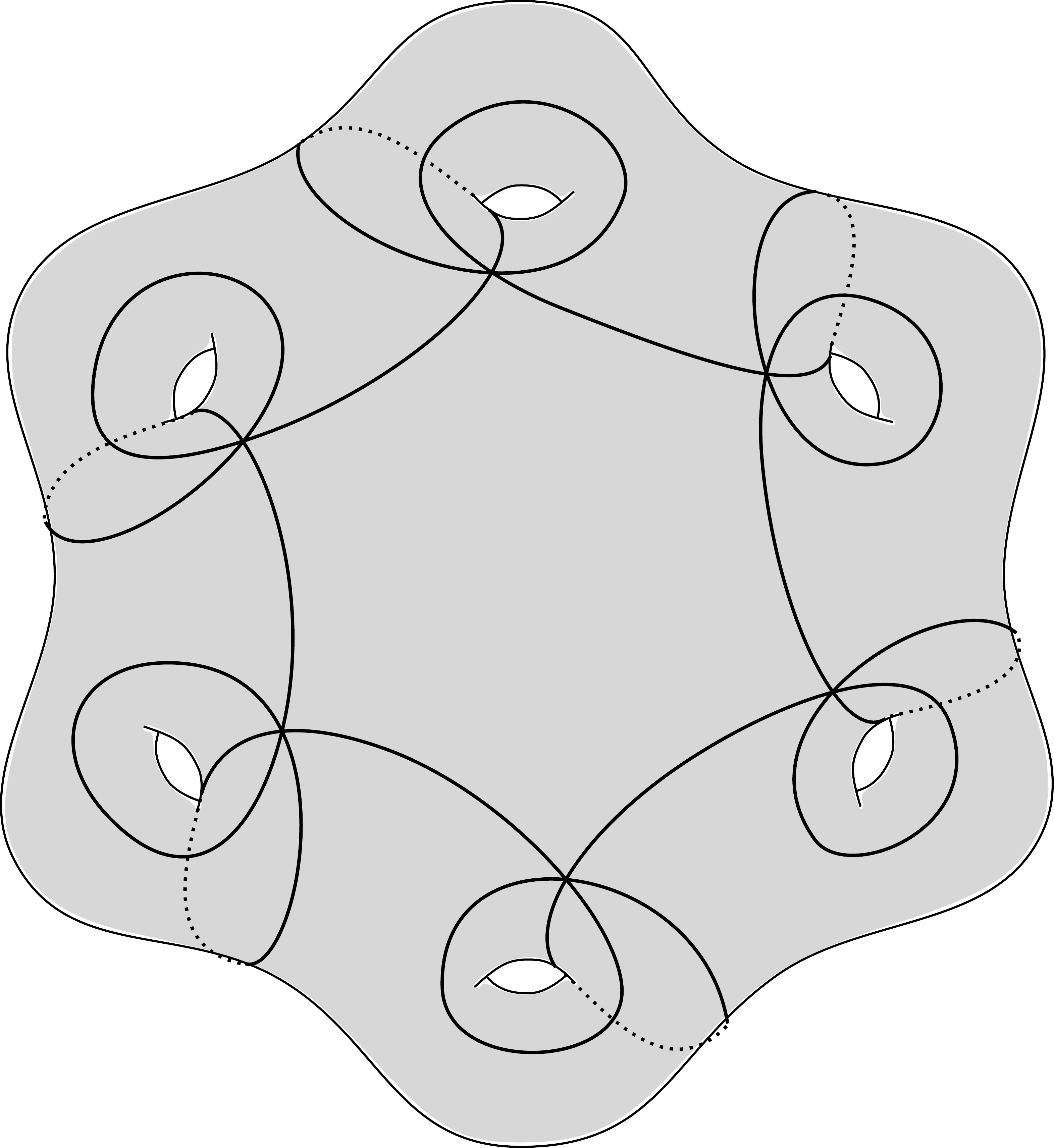}
	\caption{A gluing of $\Sigma\hspace{-.05cm}\left(\underline{\tau}_6,\sigma\right)$ without triangles or bigons in the complement of $\Gamma\hspace{-.05cm}\left(\underline{\tau}_6,\sigma\right)$, so that the given closed curve has six hyperplane separated $3$-cubes.}
	\label{Gamma_tau_6_gluing}
\end{figure}

\begin{proof}[Proof of Theorem \ref{corollary of main thm}]
{Let $\Sigma$ indicate the surface $\Sigma\hspace{-.05cm}\left(\underline{\tau}_k,\sigma\right)$, so that $\Sigma$ contains an embedded minimal position copy of the curve $\Gamma\hspace{-.05cm}\left(\underline{\tau}_k,\sigma\right)$, with the self-intersection number $3k$ by Lemma \ref{closed curve}. 

By Lemma \ref{suff condition for separated}, the dual cube complex of $\Gamma\hspace{-.05cm}\left(\underline{\tau}_k,\sigma\right)$ in $\Sigma$ contains $k$ hyperplane separated 3-cubes. Using Theorem \ref{main estimate}, we may estimate:

\begin{align*}
\limsup_{k\to\infty} \; \frac {\overline{M}_{k}} {k}  &\ge  \limsup_{k\to\infty} \; \frac {\overline{M}_{3k}} {3k} \\
&\ge \limsup_{k\to\infty} \; \frac{1}{3k} \ \inf \left \{ \ \ell(\Gamma\hspace{-.05cm}\left(\underline{\tau}_k,\sigma\right),X) \ : \ X\in\T(\Sigma) \ \right\} \\
&\ge \limsup_{k\to\infty} \; \frac{ k \log \left(\frac{1+\cos\frac{\pi}{3}}{1-\cos\frac{\pi}{3}}\right) } { 3k} = \frac{1}{3}\log 3~.
\end{align*}
}
\end{proof}

Note that in the construction above, it is evident that the genus of $\Sigma\hspace{-.05cm}\left(\underline{\tau}_k,\sigma\right)$ will grow with the self-intersection number of $\Gamma\hspace{-.05cm}\left(\underline{\tau}_k,\sigma\right)$. As a consequence, these lower bounds are not applicable to $\overline{m}_{k}(\Sigma)$ for a fixed surface $\Sigma$. It seems likely\footnote{While this paper was under review, this has been shown in \cite[Theorem 1.4]{agps}.} that $\overline{m}_k(\Sigma)$ grows as $\sqrt{k}$.


\begin{thebibliography}{Sag2}

\bibitem[AG]{aougab-gaster}
Tarik Aougab and Jonah Gaster.
\newblock {Curves intersecting exactly once and their dual cube complexes}.
\newblock To appear in \emph{Groups Geom. Dyn.}, 02 2015.
\newblock  {{\tt 1502.00311}}

\bibitem[AGPS]{agps}
Tarik Aougab, Jonah Gaster, Priyam Patel, and Jenya Sapir.
\newblock {Building hyperbolic metrics suited to closed curves and applications
  to lifting simply}.
\newblock 03 2016.
\newblock  {{\tt 1603.06303}}

\bibitem[Bas]{basmajian}
Ara Basmajian.
\newblock {Universal length bounds for non-simple closed geodesics on
  hyperbolic surfaces}.
\newblock {\em J. Topol.} {\bf 6}(2013), 513--524.

\bibitem[CN]{chatterji-niblo}
Indira Chatterji and Graham Niblo.
\newblock {From wall spaces to {CAT(0)} cube complexes}.
\newblock {\em Internat. J. Algebra Comput.} {\bf 15}(2005), 875--885.

\bibitem[FM]{farb-margalit}
Benson Farb and Dan Margalit.
\newblock {\em A primer on mapping class groups}, volume~49 of {\em Princeton
  Mathematical Series}.
\newblock Princeton University Press, Princeton, NJ, 2012.

\bibitem[HS]{hass-scott}
Joel Hass and Peter Scott.
\newblock {Intersections of curves on surfaces}.
\newblock{\em Isr. J. of Math.} {\bf 51}(1985), 90--120.

\bibitem[Sag1]{sageev}
Michah Sageev.
\newblock {Ends of group pairs and non-positively curved cube complexes}.
\newblock {\em Proc. London Math. Soc. (3)} {\bf 71}(1995), 585--617.

\bibitem[Sag2]{sageev2}
Michah Sageev.
\newblock {{$\rm CAT(0)$} cube complexes and groups}.
\newblock In {\em Geometric group theory}, volume~21 of {\em IAS/Park City
  Math. Ser.}, pages 7--54. Amer. Math. Soc., Providence, RI, 2014.

\bibitem[Wis]{Wise}
Daniel~T. Wise.
\newblock {Subgroup separability of graphs of free groups with cyclic edge
  groups}.
\newblock {\em Q. J. Math.} {\bf 51}(2000), 107--129.

\end{thebibliography}
\end{document}